\numberwithin{equation}{section}
\theoremstyle{plain}
\newtheorem{thm}{Theorem}[section]
\newtheorem{corollary}{Corollary}[section]
\newtheorem{example}{Example}[section]
\newtheorem{proposition}{Proposition}[section]
\newtheorem{lemma}{Lemma}[section]
\newtheorem{definition}{Definition}
\newtheorem*{definition*}{Definition}
\newtheorem{remark}{Remark}[section]
\newcommand{\X}{\mathcal{X}}
\newcommand{\A}{\mathcal{A}}
\newcommand{\B}{\mathcal{B}}
\newcommand{\G}{\mathcal{G}}
\newcommand{\EE}{\mathcal{E}}
\newcommand{\E}{\mathbb{E}}
\newcommand{\PP}{\mathbb{P}}
\newcommand{\II}{\mathbbm{1}}
\newcommand{\ol}{\overline}
\newcommand{\field}[1]{\mathbb{#1}}
\newcommand{\R}{\field{R}}
\newcommand{\C}{\field{R}}
\newcommand{\Var}{\text{Var}}
\begin{document}
\title{Concentration Inequalities in Locally Dependent Spaces}
\runtitle{Concentration Inequalities in Locally Dependent Spaces}

\begin{aug}
\author{\fnms{Daniel} \snm{Paulin} \ead[label=e1]{paulindani@gmail.com}} 
\runauthor{D. Paulin}

\affiliation{National University of Singapore}

\address{
Department of Mathematics, National University of Singapore\\
10 Lower Kent Ridge Road, Singapore 119076, Republic of Singapore.\\
\printead{e1}}

\end{aug}

\begin{keyword}[class=AMS]
\kwd[Primary ]{60E15}
\kwd[; secondary ]{60B20.}
\end{keyword}

\begin{keyword}
\kwd{Concentration inequality}
\kwd{local dependence}
\kwd{random matrices}
\end{keyword}

\begin{abstract}
This paper studies concentration inequalities for functions of locally dependent random variables.
We show that the usual definition of local dependence does not imply concentration for general Hamming Lipschitz functions. We define hypergraph dependence, which is a special case of local dependence, and show that it implies concentration if the maximal neighborhood size is small. We prove concentration in Hamming distance, Talagrand distance, and for self-bounding functions of a particular type under this dependence structure.
\end{abstract}
\maketitle

\section{Introduction}
Local dependence, when the variables only depend on those others which are in their neighborhood, has been one of the first examples of Stein's method, see \citep{ChenShaoLoc} and the references therein.

The usual form of local dependence is the following (based on \citep{ChenGoldsteinShao}, Chapter 4.7.):
\begin{definition}\label{LD1def}
A group of random variables $\{X_i, i\in \A\}$ satisfies $(LD1)$ if for each $i\in \A$ there exists $A_i\in [n]$ such that $X_i$ and $X_{A_i^c}$ are independent. 

An undirected graph $\G=([n],\EE)$ is called a dependency graph of $\{X_i, i\in \A\}$ if each $X_i$ can only depend on its neighbors in $\G$ (i.e. it is independent of the complement of its neighbors). An example for such a graph $\G$ is a graph with edge between $i$ and $j$ if $i\in A_j$ or $j\in A_i$ (i.e. one of them is in the neighborhood of the other).

We say that $\{X_i, i\in [n]\}$ satisfies \emph{(LD1, $m$)} if there is a $\G$ dependency graph that has maximum degree at most $m-1$.
\end{definition}

Let $\G=(V,E)$ be an undirected graph. The \emph{chromatic number} of $\G$, $\chi(\G)$ is the smallest positive integer $k$ such that the vertices of $\G$ can be colored with $k$ colors with no edge between vertices of the same color.

\citep{Jansonlargedev} shows concentration of sums under (LD1), and shows that Chernoff-Hoeffding and Bernstein inequalities also hold for sums of (LD1) dependent variables, with constants less than $\chi(\G)$ times weaker than in the independent case.

The objective of this paper is to investigate whether this result holds for more general functions of (LD1) dependent variables. We could extend \citep{Jansonlargedev} to subadditive functions. On the other hand, as the following counterexample shows, concentration does not holds for arbitrary Hamming-Lipschitz functions:

\begin{example}\label{counterex}
For $n$ even, let $X_1,\ldots,X_{n/2}, X_1',..., X_{n/2}'$ be random variables taking values 1 and -1.
Let $X_1,...,X_{n/2}$ be i.i.d. with $P(X_i=1)=P(X_i=-1)=1/2$. 
Let $Q$ be an independent random variable with $P(Q=1)=P(Q=-1)=1/2$. Define $X_i'=Q\cdot X_i$, $1\le i\le n/2$.

For $\{X_1,\ldots,X_{n/2}, X_1',..., X_{n/2}'\}$ as defined such satisfy (LD1,2).
Define the function $g:\R^2\to \R$ as $g(a,b)=a\cdot b/2$, then $g(1,1)=g(-1,-1)=1/2$ and $g(1,-1)=g(-1,1)=-1/2$. 
Define $f(X_1,...,X_{n/2},X_1',...,X_{n/2}')=\sum_{i=1}^{n/2} g(X_i,X_i')$, then this $f$ is 1-Hamming Lipschitz (depending on $n$ variables).
On the other hand, for the distribution we gave to $X_i$ and $X_i'$, we have $g(X_i,X_i')=Q$, so  
$f(X_1,...,X_{n/2},X_1',...,X_{n/2}')=nQ/2$, taking values $n/2$ and $-n/2$ with probability 1/2.
\end{example}

We have looked for examples in the literature about (LD1) dependent random variables, and most of them were defined as functions of independent random variables. For such cases, as we will show, concentration inequalities hold for general functions.

\subsection{Main definitions}

We will use the fractional chromatic number:
\begin{definition*}
Let $\G=(V,E)$ be an undirected graph. The \emph{fractional chromatic number} of $\G$, $\chi^*(\G)$ is the smallest positive real $k$ for which there exists a probability distribution over the independent sets of $\G$ such that for each vertex $v$, given an independent set $S$ drawn from the distribution,
\[Pr(v\in S) \ge \frac{1}{k}.\]
\end{definition*}
The independent sets of $\G$ here mean all the subsets of the vertices of $\G$ that contain no edges between them.

\citep{Jansonlargedev} introduces these:
\begin{definition*}
Given $\A$ and $\{X_{\alpha}\}$, $\alpha\in \A$, we make the following definitions:
\begin{itemize}
\item A subset $\A'$ of $\A$ is independent if the corresponding random variables $\{X_{\alpha}\}_{\alpha\in A'}$ are independent.
\item A family $\{\A_j\}_j$ of subsets of $\A$ is a cover of $\A$ if $\cup_j \A_j=\A$.
\item A family $\{(A_j,w_j)\}_j$ of pairs $(\A_j,w_j)$ where $\A_j\subset \A$ and $w_j\in [0,1]$
is a \emph{fractional cover} of $\A$ if $\sum_{j:\alpha\in \A_j}w_j\ge 1$ for each $\alpha\in \A$.
\item A (fractional) cover is \emph{proper} if each set $A_j$ is independent.
\item $\chi(\A)$ is the size of the smallest proper cover of $\A$, i.e. the smallest $m$ such that $\A$ is the union of $m$ independent subsets
\item $\chi^*(\A)$ is the minimum of $\sum_{j}w_j$ over all proper fractional covers
$\{(\A_j,w_j)\}_j$.
\item We say that a fractional cover $\{(\A_j,w_j)\}_j$ is \emph{exact} if $\sum_j w_j \II_{\A_j}=\II_{A}$.
\end{itemize}
\end{definition*}

It is shown in \citep{Jansonlargedev} that for (LD1,$m$) with dependency graph $\G$,
\[\chi^*(\A)\le \chi^*(\G)\le \chi(\G)\le m.\]

Lemma 3.2 of \citep{Jansonlargedev} shows that we can make an exact fractional cover from any fractional cover without changing $\sum_{j}w_j$, thus we can restrict our attention to exact fractional covers.

The first result for sums is the following:

\begin{thm}[Theorem 2.1 of \citep{Jansonlargedev}]
Suppose that $\{X_i\}_{i \in \A}$ satisfies (LD1), $a_i\le X_i\le b_i$ for $i\in \A$ and some real numbers $a_i$ and $b_i$. Then, for $t>0$,
Denote $S:=\sum_{i\in \A} X_i$, then for $t>0$,
\begin{equation}\label{thm21eq}
\PP(X\ge \E X+t)\le \exp\left(-2\frac{t^2}{\chi^*(\A) \sum_{i\in \A}(b_i-a_i)^2}\right).
\end{equation}
The same estimate holds for $\PP(X\le \E X-t)$.
\end{thm}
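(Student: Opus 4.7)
The plan is to run a Chernoff bound on $S-\E S$ after splitting $S$ through an exact proper fractional cover attaining $\chi^*(\A)$. This decomposes the moment generating function into contributions from genuinely independent sub-sums, each controllable by Hoeffding's lemma, with the fractional weights absorbed by a generalized H\"older inequality.

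Concretely, fix $\lambda>0$ and let $\{(\A_j,w_j)\}_j$ be an exact proper fractional cover with $W:=\sum_j w_j=\chi^*(\A)$ and $\sum_{j:\, i\in\A_j} w_j=1$ for every $i\in\A$; such a cover exists by Lemma~3.2 of \citep{Jansonlargedev}. Setting $S_j:=\sum_{i\in\A_j}X_i$ we have $S=\sum_j w_j S_j$, and hence
\[
e^{\lambda(S-\E S)} \;=\; \prod_j \bigl(e^{\lambda W(S_j-\E S_j)}\bigr)^{w_j/W}.
\]
The weights $w_j/W$ are nonnegative and sum to $1$, so generalized H\"older gives
\[
\E e^{\lambda(S-\E S)} \;\le\; \prod_j \bigl(\E e^{\lambda W(S_j-\E S_j)}\bigr)^{w_j/W}.
\]

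Each $\A_j$ is independent, so the usual Hoeffding computation on each factor yields $\E e^{\lambda W(S_j-\E S_j)}\le \exp\bigl(\lambda^2 W^2 \sum_{i\in\A_j}(b_i-a_i)^2/8\bigr)$. Substituting and interchanging sums, the exactness identity $\sum_{j:\, i\in\A_j} w_j=1$ collapses the double sum to $W\sum_{i\in\A}(b_i-a_i)^2$, giving
\[
\E e^{\lambda(S-\E S)} \;\le\; \exp\Bigl(\tfrac{\lambda^2 \chi^*(\A)}{8}\sum_{i\in\A}(b_i-a_i)^2\Bigr).
\]
Markov's inequality with the optimal choice $\lambda=4t/(\chi^*(\A)\sum_i(b_i-a_i)^2)$ then produces the stated upper tail; the lower tail is identical after replacing $\lambda$ by $-\lambda$.

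The only real point requiring care is the H\"older step: one needs the fractional cover to be \emph{exact} (not merely a cover) so that the $(b_i-a_i)^2$ contributions collapse cleanly via $\sum_{j:\, i\in\A_j} w_j=1$ instead of being over-counted, and one needs to rescale by $W$ inside the MGF so that the H\"older exponents $w_j/W$ form a probability vector. Once that bookkeeping is fixed, Chernoff plus Hoeffding does everything, and the improvement of $\chi^*(\A)$ over the naive $\chi(\A)$ comes entirely from allowing the weights $w_j$ to be real-valued.
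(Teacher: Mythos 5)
Your proof is correct, and it is essentially the approach this paper points to: the statement is quoted here from Theorem 2.1 of \citep{Jansonlargedev} without a proof, the paper only remarking that the idea is to ``separate $\sum_{i\in\A}X_i$ into sums of independent random variables,'' which is exactly what you do via an exact proper fractional cover, the generalized H\"older factorization of $\E e^{\lambda(S-\E S)}$ into independent sub-sums, Hoeffding's lemma on each factor, and the exactness identity $\sum_{j:\, i\in\A_j}w_j=1$ to collapse the variance terms, with all constants (including the optimal $\lambda=4t/(\chi^*(\A)\sum_{i\in\A}(b_i-a_i)^2)$ and the resulting exponent $-2t^2/(\chi^*(\A)\sum_{i\in\A}(b_i-a_i)^2)$) checking out. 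You correctly identified the two delicate points --- needing the cover to be exact so the double sum collapses rather than over-counts, and rescaling by $W$ so the H\"older exponents $w_j/W$ form a probability vector --- so there is nothing to fix.
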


Further Bernstein-type results are proven in \citep{Jansonlargedev}, which take into account  the variance of $X_i$, and thus give better bounds than \eqref{thm21eq} for sums of random variables with small variances, an example is the following:
\begin{thm}[Theorem 2.3 of \citep{Jansonlargedev}]
Suppose that $\{X_i\}_{i \in \A}$ satisfies (LD1), $X_i-\E X_i \le b$ for some $b>0$, and all $i\in \A$. Then, for $t>0$,
Denote $S:=\sum_{i\in \A} \Var (X_i)$, then for $t>0$,
\begin{equation}\label{thm21eq}
\PP(X\ge \E X+t)\le \exp\left(-\frac{8t^2}{25\chi^*(\A)(S+bt/3)}\right).
\end{equation}
\end{thm}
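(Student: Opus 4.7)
The plan is to reduce the dependent case to an independent one via the exact fractional cover machinery already introduced in the excerpt, and then apply a standard Bernstein moment generating function estimate.

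First, I would fix an exact proper fractional cover $\{(\A_j,w_j)\}_j$ of $\A$ with $\sum_j w_j = W = \chi^*(\A)$; this exists thanks to Janson's Lemma 3.2 quoted above. Writing $\tilde X_i := X_i - \E X_i$ and $\tilde X_{\A_j} := \sum_{i\in \A_j}\tilde X_i$, exactness gives the pointwise identity $\tilde X = X - \E X = \sum_j w_j \tilde X_{\A_j}$, so for any $\lambda>0$,
\[
\E\exp(\lambda \tilde X) \;=\; \E\prod_j \exp\bigl(\lambda w_j \tilde X_{\A_j}\bigr).
\]
Second, I would apply the generalized Hölder inequality with exponents $r_j = W/w_j$ (so that $\sum_j 1/r_j = 1$):
\[
\E\exp(\lambda \tilde X) \;\le\; \prod_j \bigl(\E\exp(\lambda W \tilde X_{\A_j})\bigr)^{w_j/W}.
\]
This is the step where local dependence is converted into independence: since the cover is proper, the variables inside each $\A_j$ are independent, and hence $\E\exp(\lambda W \tilde X_{\A_j}) = \prod_{i\in \A_j} \E\exp(\lambda W \tilde X_i)$.

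Third, I would insert the classical Bernstein moment bound for a single variable satisfying $\tilde X_i \le b$:
\[
\E\exp(\mu \tilde X_i) \;\le\; \exp\!\Bigl(\tfrac{\mu^2 \Var(X_i)/2}{1-b\mu/3}\Bigr), \qquad 0<\mu<3/b,
\]
taken with $\mu = \lambda W$, and collapse the product using exactness once more: $\sum_j w_j \sum_{i\in \A_j}\Var(X_i) = \sum_i \Var(X_i) = S$. This yields the aggregate MGF bound
\[
\E\exp(\lambda \tilde X) \;\le\; \exp\!\Bigl(\tfrac{W S \lambda^2/2}{1-bW\lambda/3}\Bigr), \qquad 0<\lambda<3/(Wb).
\]
Finally, Markov's inequality applied to $\exp(\lambda \tilde X)$ and the standard Bernstein optimization $\lambda = t/(WS + bWt/3)$ produce a tail bound of the form $\exp(-t^2/[2\chi^*(\A)(S+bt/3)])$.

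The only genuinely delicate point is matching the explicit constant $8/25$ in Janson's statement: the straightforward Hölder/Bernstein route above gives the slightly cleaner constant $1/2$ in the exponent, so to recover precisely $8/25$ one either plugs in a suboptimal $\lambda$ (as Janson does, trading a sharp optimization for a closed form that is uniform over the regime $\lambda < 3/(Wb)$) or uses a one-sided Bennett-type MGF bound and bounds $e^x - 1 - x$ by $x^2/(2(1-x/3))$ with a loosened denominator. This constant-chasing step is the main obstacle to a proof that literally matches the cited inequality; the structural part of the argument is routine given the fractional cover decomposition.
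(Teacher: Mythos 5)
Your argument is correct, but be aware that the paper contains no proof of this statement to compare against: it is quoted verbatim as Theorem 2.3 of \citep{Jansonlargedev}, and the present paper only summarizes the proof idea as separating $\sum_{i\in\A}X_i$ into sums of independent random variables. Your proposal implements exactly that reduction, so structurally you coincide with the source; the one genuine difference is the averaging device. Where Janson (and this paper, in its proof of Theorem \ref{thmsubadd}) passes from $e^{\lambda \sum_j w_j \tilde X_{\A_j}}$ to the arithmetic mean $\sum_j (w_j/W)\, e^{\lambda W \tilde X_{\A_j}}$ via convexity of the exponential, you use generalized H\"older with exponents $W/w_j$ to obtain the geometric mean $\prod_j \bigl(\E e^{\lambda W \tilde X_{\A_j}}\bigr)^{w_j/W}$. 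For a Bernstein-type bound your choice is in fact the more convenient one: the geometric mean lets the per-block variances $V_j=\sum_{i\in\A_j}\Var(X_i)$ recombine exactly, via exactness of the cover, into $\sum_j w_j V_j = S$ inside a single exponent, whereas after the Jensen step the arithmetic mean of terms $\exp(g(V_j))$ cannot be collapsed to $\exp(g(S/W))$ because convexity points the wrong way, so that route requires extra maneuvering (which is partly where Janson's looser constant originates). Your final bound $\exp\bigl(-t^2/(2\chi^*(\A)(S+bt/3))\bigr)$ is strictly stronger than the stated one since $8/25<1/2$, so the theorem follows a fortiori, and your closing diagnosis is right: the $8/25$ is an artifact of a suboptimal closed-form choice of $\lambda$, not an obstruction. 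The supporting steps all check out: under (LD1) a proper cover consists of jointly (not merely pairwise) independent blocks, since iterating $X_i \perp X_{A_i^c}$ over the block yields joint independence; the one-sided Bennett--Bernstein MGF bound needs only $X_i-\E X_i\le b$, as assumed; and the optimizing $\lambda = t/(W(S+bt/3))$ lies in the admissible range $(0,3/(Wb))$.
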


The main idea of the proofs in \citep{Jansonlargedev} is to separate $\sum_{i\in \A}X_i$ into sums of independent random variables, and use the concentration properties of such sums.

Our first result, Theorem \ref{thmsubadd}, gives an upper tail bound for subadditive functions. The proof is based on the same idea as \citep{Jansonlargedev}. An application is given: an estimate for the upper tail of the norm of random matrix with locally dependent entries.

Example \ref{counterex} made us look for other, stronger definitions of local dependence, that are sufficient for concentration for large class of functions ((GD) is based on \citep{ChenGoldsteinShao}).

\begin{definition}[GD]\label{GDdef}
$\{X_i, i\in \A\}$ satisfies \emph{graphical dependence} if we can define a graph $\G=(\A,\EE)$ such that
for any pair of disjoint sets $\Gamma_1, \Gamma_2$ in $\A$ such that there is no edge in $\EE$ that has one endpoint in $\Gamma_1$ and another in $\Gamma_2$, the sets of random variables $X_{\Gamma_1}$ and $X_{\Gamma_2}$ are independent. In this case $\G$ is called the dependency graph. We say $\{X_i, i\in [n]\}$ satisfies \emph{(GD, $m$)} if $\G$ has maximum degree at most $m-1$.
\end{definition}

\begin{definition}[HD]\label{HDdef}
Let $\{Y_i, i\in [N]\}$ be a set of independent random variables taking values in $\Sigma=\Sigma_1\times \ldots \times \Sigma_N$, and for each $i\in [n]$, let $S_i$ be subsets of $[N]$, and $X_i: Y_{S_i}\to \Lambda_i$ be random variables depending on $Y_{S_i}$. For each $j\in [N]$, let $R_j:=\{i\in [N] s.t. j\in S_i\}$ (i.e. $R_j$ the set of $X_i$ depending of $Y_j$, and $S_i$ is the set of $Y_j$ that $X_i$ depends on).

We say that $\{X_i, i\in [n]\}$ satisfies \emph{(HD, $k$, $l$)} if $|S_i|\le k$ and $|R_j|\le l$ for every $i\in [n], j\in [N]$.
Let $\G=([n],\EE)$ be an undirected graph with an edge between $i$ and $j$ if $X_i$ and $X_j$ depend on some common $Y_k$ (i.e. if
$S_i\cap S_j\neq \emptyset$). If $\G$ has maximum degree at most $m-1$, then we say $\{X_i:i\in [n]\}$ satisfies \emph{(HD, $m$)}.
\end{definition}
A relation between (HD, $k$, $l$) and (HD, $m$) is given by the following lemma (the proof is given in Section \ref{secproofs}):
\begin{lemma}\label{HDlemma}
Suppose that $\{X_i:i\in [n]\}$ satisfies $(HD,m)$ for some $\{Y_i, i\in [N]\}$, then we can define $\{Y_i', i\in [N']\}$ such that $\{X_i:i\in [n]\}$ satisfies $(HD,m,m)$ for $\{Y_i', i\in [N']\}$.
\end{lemma}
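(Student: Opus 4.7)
The plan is to collapse the $Y_j$'s into $n$ new independent random variables indexed by $[n]$, giving each $Y_j$ a canonical ``owner'' among the $X_i$ that use it. Concretely, I set $N' = n$ and for each $i \in [n]$ define
\[
C_i = \{ j \in [N] : R_j \neq \emptyset,\ \min R_j = i \},
\]
discarding any $Y_j$ with $R_j = \emptyset$ since no $X_i$ depends on it; the remaining indices are then partitioned by the $C_i$. Defining $Y_i' := (Y_j)_{j \in C_i}$ (a degenerate constant when $C_i = \emptyset$) makes the $Y_i'$ independent because the $C_i$ are disjoint, and every $X_i$ remains a measurable function of $(Y_k')_{k \in S_i'}$ since each $j \in S_i$ lies in $C_{\min R_j}$.

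The combinatorial crux is the observation that for every $j$, the set $R_j$ is a clique in the dependency graph $\G$: any two indices $i_1, i_2 \in R_j$ share the dependence on $Y_j$, so $j \in S_{i_1} \cap S_{i_2}$, which by Definition \ref{HDdef} forces the edge $(i_1, i_2) \in \EE$. Since $\G$ has maximum degree at most $m - 1$, every clique in $\G$ has size at most $m$, and for any $i \in R_j$ we get $R_j \subseteq \{i\} \cup N_\G(i)$.

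The two required bounds then fall out immediately. If $X_i$ depends on $Y_k'$, there is a witness $j \in C_k \cap S_i$; since $j \in C_k$ gives $k \in R_j$ and $j \in S_i$ gives $i \in R_j$, the clique property at $i$ yields $k \in \{i\} \cup N_\G(i)$, so $|S_i'| \le m$. Symmetrically, if $i \in R_k'$ the same kind of witness puts $\{i, k\} \subseteq R_j$, and the clique property at $k$ gives $i \in \{k\} \cup N_\G(k)$, so $|R_k'| \le m$.

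There is no real obstacle: the argument is purely combinatorial and the clique property of $R_j$ does all the work once the ownership partition is in place. The only bookkeeping is to discard unused $Y_j$ and to pad empty $C_i$ with trivial variables, neither of which affects independence or the dependence structure of the $X_i$.
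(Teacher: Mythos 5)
Your proof is correct and takes essentially the same approach as the paper: the paper's construction also assigns each $Y_j$ to the smallest-indexed $X_i$ depending on it (its $S_i'$ is exactly your $C_i$) and bundles the groups into $n$ new independent variables. The only difference is that the paper leaves the verification of (HD,$m$,$m$) to the reader, while your clique argument (every $R_j$ spans a clique in $\G$, so $R_j\subseteq\{i\}\cup N_{\G}(i)$ and both $|S_i'|$ and $|R_k'|$ are at most $m$) correctly supplies the omitted details.
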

\begin{example}[$m$-dependence]
A simple example to illustrate the difference between (HD,$k$,$l$) and (HD,$m$) is the following: let $Y_1,\ldots,Y_n$ be independent random variables, and
\[X_1:=f_1(Y_1,\ldots,Y_m), X_2:=f_2(Y_2,\ldots, Y_{m+1}), \ldots, X_n:=f_n(Y_n,Y_1,\ldots,Y_{m-1}).\]
Then one can easily prove (by breaking $Y$ into groups of size $m$) that $X_1,\ldots,X_n$ satisfy (HD, $2$, $2m-1$) and (HD,$2m-1$).
\end{example}
\begin{example}[Triangles in Erd\H{o}s-R\'{e}nyi graph]
Let $G(n,p)$ be an Erd\H{o}s-R\'{e}nyi graph, with edges $(X_{ij})_{1\le i<j\le n}$ (i.e. $X_{ij}$ is the indicator function of the edge between $i$ and $j$), and denote by $(T_{ijk})_{1\le i<j<k\le n}$ the indicator functions of the triangles between vertices $i,j,k$. Then one can easily see that $(T_{ijk})_{1\le i<j<k\le n}$ satisfies (HD,$n-2$,$3$) and (HD,$3(n-3)$).
\end{example}

It is an easy exercise to prove that 
\[(\text{HD, } m) \Rightarrow (\text{GD, }m)\Rightarrow  (\text{LD1, }m).\]

The reverse implications are false in general. 

(LD1, $m$) does not imply (GD, $m$), as we can see from Example \ref{counterex}.

(GD, $m$) does not imply (HD, $m$), we can see this from the example in \citep{onedep} where they construct a one - dependent sequence (future independent of past) which only satisfies (GD, $m$) (the existence of such a sequence was an open question for many years).

It remains an open question whether (GD, $m$) implies concentration inequalities for general functions. At the moment, we do not know of practical applications that satisfy (GD, $m$), but not (HD, $m$).

\subsection{Self-bounding and $\alpha$-self-bounding functions}
Self bounding functions were introduced in \citep{Boucheronsharp}, and found many applications. In \citep{LugosiSelfBounding}, the authors introduce $(a,b)$ self-bounding and weakly $(a,b)$ self-bounding functions.

For independent random variables, $(a,b)$ self-bounding functions are a large class of functions, that contain Hamming Lipschitz functions, configuration functions, suprema of positive valued empirical processes. They also imply Talagrand's convex distance inequality.

In \citep{Martoncoupling}, we have defined a stronger condition, $\alpha$-$(a,b)$ self-bounding and weakly
$\alpha$-$(a,b)$ self-bounding functions, and shown that such functions satisfy concentration inequalities under some dependence condition. As we are going to see, they also satisfy concentration inequalities under the (HD,$k$,$l$) dependence condition.

The following definitions of self-bounding functions are from \citep{LugosiSelfBounding} (we made a slight generalization, they had $\Lambda_1=\ldots=\Lambda_n=\X$).

For $1\le i\le n$, and $x\in \Lambda$, let $x_{-i}=(x_1,\ldots,x_{i-1},x_{i+1},\ldots,x_n)$, and let
$\Lambda_{-i}:=\Lambda_1\times\ldots\times\Lambda_{i-1}\times \Lambda_{i+1}\times\ldots\times \Lambda_n$.
\begin{definition}\label{defsb}
A function $g:\Lambda\to \R$ is called $(a,b)$-self-bounding for some $a,b\ge 0$ if there are functions $g_i:\Lambda_{-i}\to \R$ such that for all $i=1,\ldots,n$ and all $x\in \Lambda$,
\begin{enumerate}
\item$0\le g(x)-g_i(x_{-i})\le 1$, and\\
\item$\sum_{i=1}^n (g(x)-g_i(x_{-i}))\le ag(x)+b$.
\end{enumerate}
A function $g:\Lambda\to \R$ is called weakly $(a,b)$-self-bounding if there are functions $g_i:\Lambda_{-i}\to \R$ such that for all $x\in \Lambda$,
\[\sum_{i=1}^n \left(g(x)-g_i(x_{-i})\right)^2\le a g(x)+b.\]
\end{definition}
\begin{remark}
If $g$ is $(a,b)$-self-bounding, then it is also $(a,b)$-weakly-self-bounding.
If $g$ is $(a,b)$-self-bounding for some $g_i$, then it is also $(a,b)$-self-bounding for
\begin{equation}\label{fimin}
g_i(x_{-i}):=\inf_{x_i'\in \Lambda_i}f(x_1,\ldots,x_{i-1},x_{i}',x_{i+1},\ldots,x_n).
\end{equation}
If $g$ is weakly $(a,b)$-self-bounding, then in this paper we will also assume that $g_i(x_{-i})\le g(x)$ for all $x\in \Lambda$, and in this case, we can choose $g_i$ as in \eqref{fimin}. In the rest of this paper, we assume that $g_i$ is chosen as \eqref{fimin}.
\end{remark}

For these functions, the following concentration inequalities hold (\citep{LugosiSelfBounding} supposed that $\Lambda_1=\ldots=\Lambda_n=\X$, but the same results trivially hold for this case):
\begin{thm}\label{Lugosi}(\citep{LugosiSelfBounding})
Let \[X:=(X_1,\ldots,X_n)\] be a vector of independent random variables, taking values in $\Lambda$ and let $f: \Lambda \to \R$ be a non-negative measurable function such that $Z=f(X)$ has finite mean. For $a,b\ge 0$, define $c=(3a-1)/6$.
If $f$ is $(a,b)$-self-bounding, then for all $\lambda\ge 0$,
\[\log \E\left[e^{\lambda(Z-\E Z)}\right]\le \frac{(a\E Z+b)\lambda^2}{2(1-c_+ \lambda)}\]
For all $t> 0$,
\[\PP\{Z\ge \E Z + t\}\le \exp\left(-\frac{t^2}{2(a\E Z + b + c_+ t)}\right).\]
If $f$ is weakly $(a,b)$-self-bounding  and for all $i\le n$, all $x\in \Lambda$, $f_i(x^{(i)})\le f(x)$, then for all $0\le \lambda\le 2/a$,
\[\log\E\left[e^{\lambda(Z-\E Z)}\right]\le \frac{(a\E Z+b)\lambda^2}{2(1-a\lambda/2)}\]
and for all $t>0$,
\[\PP\{Z\ge \E Z + t\}\le \exp\left(-\frac{t^2}{2(a\E Z + b + at/2}\right).\]
If $f$ is weakly $(a,b)$-self-bounding  and $f(x)-f_i(x^{(i)})\le 1$ for each $i\le n$ and $x\in \Lambda$, then for $0\le t\le \E Z$,
\[\PP\{Z\le \E Z-t\}\le \exp\left(-\frac{t^2}{2(a\E Z + b + c_- t)}\right).\]
\end{thm}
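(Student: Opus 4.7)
The plan is to follow the entropy method, as pioneered by Ledoux and developed by Boucheron, Lugosi, Massart; the proof in \citep{LugosiSelfBounding} carries over verbatim once one notes that at no point does the argument use the assumption $\Lambda_1=\ldots=\Lambda_n$. I would therefore just verify that the product-space structure with possibly distinct factors $\Lambda_i$ does not break any step, and otherwise reproduce the standard argument.

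The backbone is a modified logarithmic Sobolev inequality on product spaces: for independent $X_1,\ldots,X_n$ taking values in $\Lambda$ and any $\lambda\in\R$,
\[
\mathrm{Ent}\bigl(e^{\lambda Z}\bigr)\le \sum_{i=1}^n \E\Bigl[e^{\lambda Z}\,\psi\bigl(-\lambda(Z-Z_i)\bigr)\Bigr],
\]
with $\psi(u)=e^u-u-1$ and $Z_i=f_i(X_{-i})$. This holds independently of whether the $\Lambda_i$ agree, since the tensorisation of entropy on products only uses the product-measure structure. The first step is to invoke this inequality with the canonical choice $f_i(x_{-i})=\inf_{x_i'\in\Lambda_i} f(x_1,\ldots,x_i',\ldots,x_n)$ so that $0\le Z-Z_i$.

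Next I would exploit the self-bounding hypothesis to control $\sum_i \psi(-\lambda(Z-Z_i))$. For the $(a,b)$-self-bounding case and $\lambda\ge 0$, monotonicity of $\psi(-u)/u^2$ plus $0\le Z-Z_i\le 1$ lets me replace $\psi(-\lambda(Z-Z_i))$ by $\psi(-\lambda)\,(Z-Z_i)$, after which property (2) of Definition \ref{defsb} yields $\sum_i \psi(-\lambda(Z-Z_i))\le \psi(-\lambda)(aZ+b)$. For the weakly self-bounding case one instead uses $\psi(-u)\le u^2/2$ for $u\ge 0$ (respectively the two-sided version for the lower tail) and applies the weak self-bounding condition directly. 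Plugging the resulting bounds into the entropy inequality converts it into a first-order differential inequality for the logarithmic moment generating function $F(\lambda)=\log\E[e^{\lambda(Z-\E Z)}]$, via the identity $\lambda F'(\lambda)-F(\lambda)=\mathrm{Ent}(e^{\lambda Z})/\E[e^{\lambda Z}]-\lambda\E Z$ (Herbst's argument).

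Integrating the differential inequality with the boundary condition $F(0)=F'(0)=0$ gives the stated MGF bounds; a Chernoff/Markov optimisation in $\lambda$ then produces the tail inequalities. The delicate step, and the main obstacle in getting the sharp constant $c=(3a-1)/6$ for the $(a,b)$-self-bounding upper tail, is the quantitative comparison between $\psi(-\lambda)$ and a rational function of $\lambda$ on $\lambda\ge 0$, which is where $c_+$ arises; I would simply reproduce the elementary calculus lemma from \citep{LugosiSelfBounding}. The lower-tail bound under bounded differences requires the same method but with $\lambda<0$ and a symmetric inequality $\psi(u)\le u^2/(2(1-u/3))$ for $0\le u\le 1$, again carried over verbatim.
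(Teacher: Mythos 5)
Your proposal is correct and matches the paper's treatment: the paper does not reprove this theorem but cites \citep{LugosiSelfBounding} and merely remarks that the results carry over trivially when the factors $\Lambda_1,\ldots,\Lambda_n$ differ, which is exactly the point your argument verifies. Your sketch of the underlying entropy-method proof (tensorisation, the modified logarithmic Sobolev inequality with $\psi(u)=e^u-u-1$, the Herbst differential inequality, and the calculus lemma producing $c=(3a-1)/6$) faithfully reproduces the cited source's argument, including the correct observation that tensorisation uses only the product-measure structure.
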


We define $\alpha$-self-bounding functions as in \citep{Martoncoupling}:

\begin{definition}\label{defalphasb}
Let $\Omega=\Omega_1\times\ldots\times \Omega_{n}$. Let $a,b\ge 0$. 

\begin{enumerate}
\item
We say that $f:\Omega\to \R$ is \emph{$\alpha$-$(a,b)$ self-bounding} if there is $\alpha:\Lambda\to \R_+^{n}$ such that
\begin{enumerate}
\item $f(x)-f(y)\le \sum_{i\le n} \alpha_i(x)\II[x_i\ne y_i]$ for every $x,y\in \Omega$.\\ 
\item $\alpha_i(x)\le 1$ for every $i\le n, x\in \Omega$.\\
\item $\sum_{i\le n} \alpha_i(x)\le a f(x)+b$.
\end{enumerate}
\item
We say that $f:\Omega\to \R$ is  \emph{weakly $\alpha$-$(a,b)$ self-bounding} if there is $\alpha:\Lambda\to \R_+^{n}$ such that
\begin{enumerate}
\item $f(x)-f(y)\le \sum_{i\le n} \alpha_i(x)\II[x_i\ne y_i]$ for every $x,y\in \Omega$.\\ 
\item $\sum_{i\le n} \alpha_i(x)^2\le a f(x)+b$.
\end{enumerate}
\end{enumerate}
\end{definition}
\begin{remark}
It is easy to see that $\alpha$-$(a,b)$ self-bounding functions are also weakly $\alpha$-$(a,b)$ self-bounding.
\end{remark}

\begin{remark}
The following relations hold:

\begin{tabular}{c c c}
$(a,b)$-self-bounding & $\Rightarrow$ & weakly $(a,b)$-self-bounding\\
$\Uparrow$& &$\Uparrow$\\
$\alpha$-$(a,b)$-self-bounding  & $\Rightarrow$ & weakly $\alpha$-$(a,b)$-self-bounding
\end{tabular}\\\\
The reverse implications are false in general.
\end{remark}

\section{Results}
The following theorem bounds the moment generating function of subadditive functions of (LD1) variables.

\begin{thm}\label{thmsubadd}
Suppose that $\Lambda:=\R^n$ (or $\C^n$), and $f:\Lambda\to \R$ is a subadditive function, i.e. for any $x,y\in \Lambda$, $f(x+y)\le f(x)+f(y)$. 

Let $\A=[n]$, and suppose that $\{X_i\}_{i\in \A}$ satisfy (LD1). Then
\begin{itemize}
\item
If $\{\A_j\}_j$ is one of the smallest proper covers of $\A$, having $\chi(\A)$ elements, then for $\theta>0$,
\begin{equation}\label{chiAeq}
\E\left(e^{\theta f(X_1,\ldots,X_n)}\right)\le \frac{1}{\chi(\A)}\sum_{j=1}^{\chi(\A)}\E\left(e^{\theta \chi(\A) f\left(X_{\A_j}\right)}\right),
\end{equation}
here $X_{\A_j}\in \Lambda$ is defined by replacing all the components of $X$ with zeros outside of $\A_j$.
\item Suppose that $f$ also satisfies $f(cx)\le cf(x)$ for every $0\le c\le 1$, $x\in \Lambda$. Let $\{(A_j,w_j)\}_j$ be an exact fractional cover with $\sum_{j}w_j=\chi^*(\A)$.
Then for every $\theta>0$,
\begin{equation}\label{chistarAeq}
\E\left(e^{\theta f(X_1,\ldots,X_n)}\right)\le \frac{1}{\chi^*(\A)}\sum_j w_j \E\left(e^{\theta \chi^*(\A) f\left(X_{\A_j}\right)}\right).
\end{equation}
\end{itemize}
\end{thm}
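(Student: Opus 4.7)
The plan is to follow the Janson-style decomposition: split the vector $X$ into pieces supported on independent subsets of the dependency graph, apply subadditivity to push $f$ inside the sum, and then apply the convexity of the exponential (a weighted AM–GM) to pull a convex combination outside.

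For the first (non-fractional) bound, I would first observe that we may assume the smallest proper cover $\{\A_j\}$ is actually a partition: any smallest proper cover can be trimmed to a disjoint decomposition into $\chi(\A)$ independent sets without losing the properness or the count, since a subset of an independent set is still independent. Then by construction $X = \sum_{j=1}^{\chi(\A)} X_{\A_j}$, so subadditivity gives $f(X) \le \sum_j f(X_{\A_j})$. Multiplying by $\theta$ and writing $\theta \sum_j f(X_{\A_j}) = \sum_j \frac{1}{\chi(\A)} \cdot \theta \chi(\A) f(X_{\A_j})$, the convexity of $\exp$ applied with the uniform probability weights $1/\chi(\A)$ yields $e^{\theta f(X)} \le \frac{1}{\chi(\A)}\sum_j e^{\theta \chi(\A) f(X_{\A_j})}$. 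Taking expectations gives \eqref{chiAeq}; note that $X_{\A_j}$ has independent coordinates because $\A_j$ is an independent set in the dependency graph, which is what makes the individual moment generating functions on the right-hand side tractable.

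For the second (fractional) bound, the key observation is that exactness $\sum_j w_j \II_{\A_j} = \II_{\A}$ yields the pointwise identity $X = \sum_j w_j X_{\A_j}$, since for each coordinate $i$, exactly the weights with $i \in \A_j$ contribute, and they sum to $1$. Subadditivity then gives $f(X) \le \sum_j f(w_j X_{\A_j})$, and since each $w_j \in [0,1]$, the additional hypothesis $f(cx) \le c f(x)$ lets me reduce this to $f(X) \le \sum_j w_j f(X_{\A_j})$. Now I apply the convexity of $\exp$ with the probability weights $p_j := w_j/\chi^*(\A)$ (which sum to $1$ precisely because $\sum_j w_j = \chi^*(\A)$), writing $\theta \sum_j w_j f(X_{\A_j}) = \sum_j p_j \cdot \theta \chi^*(\A) f(X_{\A_j})$, and conclude \eqref{chistarAeq} after taking expectations.

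I do not anticipate a serious obstacle: the argument is purely manipulative once one notices that the exact fractional cover literally decomposes $X$ into a convex combination of independent-coordinate vectors $X_{\A_j}$. The only subtle point is ensuring the smallest-cover version really corresponds to a partition so that no additional scaling hypothesis on $f$ is needed; this is where the remark that subsets of independent sets remain independent is used. Positive homogeneity for $c \in [0,1]$ is needed only in the fractional case, which explains precisely why the first statement dispenses with that extra assumption.
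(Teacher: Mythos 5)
Your proof is correct and takes essentially the same route as the paper's: decompose $X$ over the (exact fractional) cover, apply subadditivity plus, in the fractional case, the homogeneity hypothesis $f(cx)\le cf(x)$ for $w_j\in[0,1]$, then convexity of $\exp$ with weights $1/\chi(\A)$ (resp.\ $w_j/\chi^*(\A)$), and take expectations. Your explicit trimming of the smallest proper cover to a partition is a worthwhile clarification rather than a new idea --- the paper's first chain of inequalities tacitly uses $X=\sum_j X_{\A_j}$, which requires disjointness, and your observation that subsets of independent sets remain independent justifies this without changing the argument.
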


Our next theorem is about (HD,$k$,$l$) dependent random variables.

\begin{thm}\label{thmHD}
Let $X=(X_1,\ldots,X_n)$ be a $\Lambda$ valued random vector, satisfying (HD,$k$,$l$) for some $Y=(Y_1,\ldots,Y_N)$.
\begin{itemize}
\item
If $f:\Lambda\to \R$ is $\alpha$-$(a,b)$-self-bounding for some $a,b\ge 0$,
then there is a function $g$ such that $f(X)=g(Y)$ almost surely, and the function $h$ defined as $h(y):=g(y)/l$ is $\left(ka,\frac{k}{l}b\right)$-self-bounding.
\item
If $f:\Lambda\to \R$ is weakly $\alpha$-$(a,b)$-self-bounding for some $a,b>0$, then there is a function $g$ such that $f(X)=g(Y)$ almost surely, and the function $h$ defined as $h(y):=g(y)/l$ is weakly $\left(k a,\frac{k}{l} b\right)$-self-bounding, and satisfies $|h(z)-h(z')|\le 1$ for any $z,z'$ differing only in one coordinate.
\end{itemize}
\end{thm}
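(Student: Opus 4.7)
The plan is to construct, from the $\alpha$-family given for $f$, a classical (weak) self-bounding decomposition of the composite function $h$, using the canonical infimum construction \eqref{fimin}. First I would define $g:\Sigma\to\R$ by $g(y):=f(X(y))$, where $X(y):=(X_1(y_{S_1}),\dots,X_n(y_{S_n}))$; this is well-defined because $X_i$ depends only on $Y_{S_i}$. I then set $h(y):=g(y)/l$ and, for each $j\in[N]$,
\[
h_j(y_{-j})\;:=\;\inf_{y_j'\in\Sigma_j}\, h\bigl(y_1,\dots,y_{j-1},y_j',y_{j+1},\dots,y_N\bigr).
\]
The key structural observation is that if $y$ and $y'$ differ only at coordinate $j$, then $X_i(y_{S_i})=X_i(y'_{S_i})$ whenever $j\notin S_i$, i.e.\ whenever $i\notin R_j$, so $X(y)$ and $X(y')$ can disagree only at indices in $R_j$.

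Condition (a) of Definition~\ref{defalphasb} applied to $X(y)$ and $X(y_{-j},y_j')$ therefore gives $g(y)-g(y_{-j},y_j')\le\sum_{i\in R_j}\alpha_i(X(y))$ for every $y_j'\in\Sigma_j$, and combined with the definition of $h_j$ this yields
\[
l\bigl(h(y)-h_j(y_{-j})\bigr)\;\le\;\sum_{i\in R_j}\alpha_i(X(y)).
\]
For the first bullet, $\alpha_i\le 1$ and $|R_j|\le l$ immediately give $0\le h(y)-h_j(y_{-j})\le 1$. Exchanging summations via the equivalence $i\in R_j\Leftrightarrow j\in S_i$ then produces
\[
\sum_{j=1}^N\bigl(h(y)-h_j(y_{-j})\bigr)\;\le\;\frac{1}{l}\sum_{j=1}^N\sum_{i\in R_j}\alpha_i(X(y))\;=\;\frac{1}{l}\sum_{i=1}^n |S_i|\,\alpha_i(X(y))\;\le\;\frac{k}{l}\sum_{i=1}^n\alpha_i(X(y))\;\le\;\frac{k}{l}\bigl(a f(X(y))+b\bigr)\;=\;ka\,h(y)+\tfrac{k}{l}b,
\]
where I used $|S_i|\le k$, condition (c) of Definition~\ref{defalphasb}, and $f(X(y))=l\,h(y)$.

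For the second bullet I square the inequality for $h(y)-h_j(y_{-j})$ and apply Cauchy--Schwarz:
\[
\bigl(h(y)-h_j(y_{-j})\bigr)^2\;\le\;\frac{|R_j|}{l^2}\sum_{i\in R_j}\alpha_i(X(y))^2\;\le\;\frac{1}{l}\sum_{i\in R_j}\alpha_i(X(y))^2,
\]
and the same double-counting identity yields $\sum_{j=1}^N\bigl(h(y)-h_j(y_{-j})\bigr)^2\le ka\,h(y)+\tfrac{k}{l}b$. The Hamming-Lipschitz conclusion $|h(z)-h(z')|\le 1$ for $z,z'$ differing only in coordinate $j$ follows by applying condition (a) in both directions ($X(z)\to X(z')$ and $X(z')\to X(z)$) and bounding each resulting sum by $|R_j|\cdot 1\le l$.

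The argument is essentially bookkeeping, so no single step is a serious obstacle. The one subtle point is that the rescaling $h=g/l$ plays two roles at once: it normalises the worst-case per-coordinate increment, which can be as large as $|R_j|\le l$, to unit size, and it turns the factor arising from the double-counting identity $\sum_j|R_j|=\sum_i|S_i|$ together with $|S_i|\le k$ into the advertised constants $ka$ and $\tfrac{k}{l}b$. The Hamming-Lipschitz conclusion in the weakly case invokes the pointwise bound $\alpha_i\le 1$, which I treat as part of (or a supplementary assumption to) the definition of weak $\alpha$-$(a,b)$-self-boundedness.
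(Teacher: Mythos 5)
Your proof is correct and follows essentially the same route as the paper's: define $g(y)=f(X(y))$ and $h=g/l$, take $h_j$ as the coordinatewise infimum, bound $l\bigl(h(y)-h_j(y_{-j})\bigr)$ by $\sum_{i\in R_j}\alpha_i(X(y))$, and double-count via $i\in R_j\Leftrightarrow j\in S_i$ together with $|S_i|\le k$ and $|R_j|\le l$. You are in fact slightly more complete than the paper, which writes out only the first bullet (and with a typo, $kh(y)$ in place of $ka\,h(y)$) and dismisses the second as ``similar''; your observation that the Lipschitz conclusion $|h(z)-h(z')|\le 1$ in the weak case requires $\alpha_i\le 1$, a condition absent from the stated definition of weak $\alpha$-$(a,b)$-self-bounding, correctly flags a gap in the theorem's hypotheses rather than in your argument.
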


Let $\Lambda:=\Lambda_1\times \ldots\times \Lambda_n$, we say that a function $f:\Lambda\to \R$ is $c$-weighted Hamming Lipschitz for some $c\in \R_+^n$ if for any $x,y\in \Lambda$ only differing in coordinate $i$, $|f(x)-f(y)|\le c_i$.

\begin{corollary}\label{McdiarmidHD}
Suppose that $X=\{X_i, i\in [n]\}$ satisfies (HD,$l$,$k$),  $X\in \Lambda$, then for any $c$-weighted Hamming Lipschitz $f:\Lambda\to \R$, we have for every $\lambda>0$,
\begin{eqnarray}
\E\left(\exp\left(\lambda [f(X)-\E f(X)]\right)\right)\le \exp\left(\frac{\lambda^2 kl \sum_{i=1}^n c_i^2}{8}\right),
\end{eqnarray}
and thus for every $t\ge 0$
\begin{equation}
\PP(f(X)-\E f(X)\ge t), \PP(f(X)-\E f(X)\le -t) \le \exp\left(\frac{-2 t^2}{kl \sum_{i=1}^n c_i^2}\right).
\end{equation}
\end{corollary}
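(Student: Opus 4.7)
The plan is to push the Hamming-Lipschitz function through the hypergraph representation onto the independent variables $Y_1, \ldots, Y_N$ and then apply McDiarmid's bounded differences inequality directly, estimating the bounded-difference constants coordinate by coordinate.

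Concretely, by the (HD,$l$,$k$) assumption there exist independent random variables $Y_1,\ldots,Y_N$, index sets $S_i\subset [N]$ with $|S_i|\le l$, and index sets $R_j=\{i:j\in S_i\}$ with $|R_j|\le k$, such that each $X_i$ is a function of $Y_{S_i}$. In particular $f(X)=g(Y_1,\ldots,Y_N)$ for some measurable $g$. Since the $Y_j$ are independent, the standard McDiarmid / Hoeffding tensorisation argument on $g$ yields
\[
\E\bigl[\exp(\lambda(g(Y)-\E g(Y)))\bigr]\le \exp\!\left(\frac{\lambda^2}{8}\sum_{j=1}^N d_j^2\right),
\]
where $d_j:=\sup\{|g(y)-g(y')|:y,y'\text{ differ only in coord.\ }j\}$, so everything reduces to bounding $\sum_j d_j^2$ in terms of $\sum_i c_i^2$.

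To estimate $d_j$, observe that changing the $j$-th coordinate of $Y$ can only alter the values of those $X_i$ with $i\in R_j$; for indices $i\notin R_j$ the value of $X_i$ is unchanged. Since $f$ is $c$-weighted Hamming-Lipschitz, replacing these coordinates one by one gives $d_j\le \sum_{i\in R_j}c_i$. By Cauchy--Schwarz and the bound $|R_j|\le k$,
\[
d_j^2\le |R_j|\sum_{i\in R_j}c_i^2\le k\sum_{i\in R_j}c_i^2.
\]
Summing over $j$ and swapping the order of summation via $\sum_{j=1}^N \II[i\in R_j]=|S_i|\le l$,
\[
\sum_{j=1}^N d_j^2\le k\sum_{j=1}^N\sum_{i\in R_j}c_i^2=k\sum_{i=1}^n c_i^2|S_i|\le kl\sum_{i=1}^n c_i^2.
\]
Substituting this into the MGF bound yields the first displayed inequality of the corollary, and the two-sided tail bound then follows by the standard Chernoff/Markov optimisation $\lambda = 4t/(kl\sum_i c_i^2)$ together with the same argument applied to $-f$.

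There is no real obstacle: the routine step is the McDiarmid/Hoeffding tensorisation on the independent background variables $Y_j$, and the only quantitative step with any content is the Cauchy--Schwarz bound on $d_j^2$ followed by the Fubini swap that converts $\sum_j|R_j|\cdots$ into $\sum_i |S_i|\cdots$, where the two hypergraph degree bounds $k$ and $l$ enter multiplicatively to give the factor $kl$. One could alternatively feed the Hamming-Lipschitz property into Theorem \ref{thmHD} by viewing $f$ as weakly $\alpha$-$(0,\sum_i c_i^2)$-self-bounding with $\alpha_i(x)\equiv c_i$ and then invoking Theorem \ref{Lugosi}, but a quick calculation shows this route loses a factor of $4$ in the exponent, so the direct McDiarmid argument outlined above is the one needed to obtain the stated constant $2t^2/(kl\sum_i c_i^2)$.
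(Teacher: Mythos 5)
Your proof is correct and follows exactly the route the paper itself takes: the paper's (two-line) proof says to write $f(X)=g(Y)$ and apply McDiarmid's bounded differences inequality directly to the independent variables $Y_1,\ldots,Y_N$, and your Cauchy--Schwarz estimate $d_j^2\le k\sum_{i\in R_j}c_i^2$ together with the summation swap giving $\sum_j d_j^2\le kl\sum_i c_i^2$ simply supplies the details the paper omits. Even your closing remark about the factor-$4$ loss through the weakly $\alpha$-$(0,\sum_i c_i^2)$-self-bounding route matches the paper's own comment on the alternative argument.
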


\begin{corollary}\label{TalagrandHD}
Suppose that $X=\{X_i, i\in [n]\}$ satisfies (HD,$l$,$k$), then a version of Talagrand's convex distance inequality holds:
\begin{equation}\label{talexp14HD}
\E \left(\exp\left(\frac{1}{10kl}d_T^2(X,S)\right)\right)\le \frac{1}{\PP(X\in S)}.
\end{equation}
 and as a consequence,
\begin{equation}\label{talSStHD}
\PP(X\in S)\PP\left(X\in \ol{S_t}\right)\le \exp\left(-t^2/(10kl)\right).
\end{equation}
\end{corollary}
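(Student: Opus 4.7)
The plan is to reduce the claim to the classical Talagrand convex distance inequality for the independent underlying variables $Y=(Y_1,\ldots,Y_N)$ supplied by the (HD,$k$,$l$) structure. Let $\Phi:\Sigma\to\Lambda$ satisfy $X_i=\Phi_i(Y_{S_i})$, and set $\tilde S:=\Phi^{-1}(S)\subset\Sigma$, so that $\PP(Y\in\tilde S)=\PP(X\in S)$.

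The main technical step is the pointwise comparison
\[
d_T(\Phi(y),S)\le\sqrt{kl}\,d_T(y,\tilde S),\qquad y\in\Sigma.
\]
Given $\alpha\in\R_+^n$ with $\|\alpha\|_2\le 1$, transfer the weights to the $Y$-coordinates by $\beta_j:=\sum_{i\in R_j}\alpha_i$. Applying Cauchy--Schwarz with $|R_j|\le l$ and then using $|S_i|\le k$,
\[
\|\beta\|_2^2\le l\sum_j\sum_{i\in R_j}\alpha_i^2=l\sum_i|S_i|\alpha_i^2\le kl\|\alpha\|_2^2\le kl.
\]
If $\Phi_i(y)\ne\Phi_i(y')$ then some $j\in S_i$ satisfies $y_j\ne y'_j$, hence for any $y'\in\tilde S$,
\[
\sum_i\alpha_i\II[\Phi_i(y)\ne\Phi_i(y')]\le\sum_i\alpha_i\sum_{j\in S_i}\II[y_j\ne y'_j]=\sum_j\beta_j\II[y_j\ne y'_j].
\]
Since $\Phi(\tilde S)\subseteq S$, the infimum in the definition of $d_T(\Phi(y),S)$ is at most the infimum over $y'\in\tilde S$. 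Taking the supremum over admissible $\alpha$, and noting that $\beta/\sqrt{kl}$ lies in the unit nonnegative $\ell_2$-ball, proves the comparison.

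Applying the classical Talagrand convex distance inequality to the independent vector $Y$ and the set $\tilde S$ gives
\[
\E\exp\!\left(\frac{d_T(Y,\tilde S)^2}{4}\right)\le\frac{1}{\PP(Y\in\tilde S)}=\frac{1}{\PP(X\in S)}.
\]
Combined with the pointwise comparison, this yields $\E\exp(d_T(X,S)^2/(4kl))\le 1/\PP(X\in S)$, which, since $1/(10kl)\le 1/(4kl)$, implies \eqref{talexp14HD}. The blowup bound \eqref{talSStHD} follows from Markov's inequality applied to $\exp(d_T(X,S)^2/(10kl))$: on $\{X\in\ol{S_t}\}$ we have $d_T(X,S)>t$, so
\[
\PP(X\in\ol{S_t})\le e^{-t^2/(10kl)}\,\E\exp\!\left(\frac{d_T(X,S)^2}{10kl}\right)\le\frac{e^{-t^2/(10kl)}}{\PP(X\in S)}.
\]

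The only substantive obstacle is the pointwise comparison of convex distances; once the weight transfer $\alpha\mapsto\beta$ is in place and the Cauchy--Schwarz estimate is executed, the rest is a direct invocation of Talagrand's inequality for independent variables. The two local-degree bounds $k$ and $l$ enter symmetrically through the bound $\|\beta\|_2^2\le kl$.
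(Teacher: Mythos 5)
Your proof is correct, and it takes a genuinely different route from the paper. The paper stays inside the self-bounding machinery: it invokes Lemma 3.2 of \citep{SelfBoundingDobrushin} to get that $d_T^2(\cdot,S)$ is weakly $\alpha$-$(4,0)$ self-bounding, applies Theorem \ref{thmHD} to transfer this to a weakly $(4k,0)$ self-bounding function $h(y)=g(y)/l$ of the independent variables $Y$, and then uses Theorem \ref{Lugosi} twice --- once for the moment generating function bound and once for the lower-tail estimate $\log \PP(X\in S)\le -\E\, d_T^2(X,S)/(8kl)$ --- combining the two at $\lambda=1/(10kl)$. You instead compare the convex distances directly: your weight transfer $\beta_j=\sum_{i\in R_j}\alpha_i$ with the Cauchy--Schwarz bound $\|\beta\|_2^2\le kl$ is correct (the identity $\sum_j\sum_{i\in R_j}\alpha_i^2=\sum_i |S_i|\,\alpha_i^2$ holds because $i\in R_j$ iff $j\in S_i$), the pointwise comparison $d_T(\Phi(y),S)\le\sqrt{kl}\,d_T(y,\tilde S)$ follows since $\Phi(\tilde S)\subseteq S$, and the classical Talagrand inequality for the independent $Y$'s then finishes the argument. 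Your route buys a strictly sharper constant, $\E \exp\left(d_T^2(X,S)/(4kl)\right)\le 1/\PP(X\in S)$ in place of $10kl$, and avoids the self-bounding apparatus entirely; the paper's route buys generality, since Theorem \ref{thmHD} transfers all (weakly) $\alpha$-self-bounding functions at once, making the corollary a two-line consequence of machinery the paper needs anyway (e.g.\ for Corollary \ref{McdiarmidHD} and Theorem \ref{HDeigconc}). Two trivial points worth a remark in a polished write-up: when $\PP(X\in S)=0$ the set $\tilde S=\Phi^{-1}(S)$ may be empty, but then \eqref{talexp14HD} is vacuous; and the ordering of $k$ and $l$ in the hypothesis (HD,$l$,$k$) versus your usage is immaterial, since, as you note, they enter only through the product $kl$.
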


\section{Applications}

Random matrix models with dependent entries have been considered by several authors, see for example \citep{Zeitounifiniterange}.  A model where the entries are (LD1) type, and satisfy some additional condition, appears in \citep{Schenker}, and then was further developed in \citep{Stolz}. These results  show asymptotic convergence of the eigenvalue distribution to circular law or the singular value distribution to Marchenko-Pastur law. In this paper, we prove some non-asymptotic results. In our first example, we show concentration for the upper tail of the norm of a random matrix with (LD1) entries:

\subsection{Norm of a random matrix with (LD1) entries}
\begin{thm}\label{normld1}
Let $M:=(X_{i,j})_{1\le i,j\le n}$ be a Hermitian matrix with entries bounded by $K$ in absolute value, and the upper diagonal entries
$(X_{i,j})_{1\le i\le j\le n}$ satisfying (LD1) with neighborhoods $\A=\{\A_{i,j}\}_{1\le i\le j\le n}$.
Then 
\begin{equation}\label{normconc}
\PP(||M||\ge 3C\chi^*(\A) K \sqrt{n}+t)\le \exp\left(-\frac{t^2}{32 \chi^*(\A)^2 K^2}\right)
\end{equation}
here $C$ is the universal constant in \citep{latala2005some}.

If $M:=(X_{i,j})_{1\le i\le n,j\le N}$ is a complex valued matrix, with entries satisfying  (LD1) with neighborhoods $\A=\{\A_{i,j}\}_{1\le i\le j\le n}$, then
\begin{equation}\label{normconc2}
\PP\left(||M||\ge C\chi^*(\A) K \left(\sqrt{n}+\sqrt{N}+\sqrt[4]{nN}\right)+t\right)\le \exp\left(-\frac{t^2}{8\chi^*(\A)^2 K^2}\right)
\end{equation}
\end{thm}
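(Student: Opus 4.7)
The plan is to combine the fractional-cover decomposition of Theorem~\ref{thmsubadd} with classical concentration results for random matrices with independent entries. Observe that $f(M):=\|M\|$ is subadditive (triangle inequality for the operator norm) and satisfies $f(cM)=c f(M)$ for $c\in[0,1]$, so both hypotheses of~\eqref{chistarAeq} apply with the upper-triangular entries $(X_{i,j})_{1\le i\le j\le n}$ playing the role of the underlying variables. Fixing an exact fractional cover $\{(\A_j,w_j)\}_j$ of total weight $\chi^*(\A)$, Theorem~\ref{thmsubadd} yields
\begin{equation*}
\E e^{\theta\|M\|}\le \frac{1}{\chi^*(\A)}\sum_j w_j\,\E\exp\bigl(\theta\,\chi^*(\A)\,\|M_{\A_j}\|\bigr),
\end{equation*}
where $M_{\A_j}$ is the Hermitian matrix whose upper-triangular entries agree with those of $M$ on positions indexed by $\A_j$ and vanish elsewhere.

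By the definition of an independent set in the dependency graph, the nonzero entries of $M_{\A_j}$ are mutually independent. Assuming the entries are centred (which is implicit in the appearance of $3CK\sqrt{n}$), Latala's inequality~\citep{latala2005some} gives $\E\|M_{\A_j}\|\le 3CK\sqrt{n}$ in the Hermitian case, and the rectangular analogue $CK(\sqrt{n}+\sqrt{N}+\sqrt[4]{nN})$ for the second part. Separately, $\|M_{\A_j}\|$ is a convex $\sqrt{2}$-Lipschitz function (with respect to the Euclidean metric on the upper-triangular entries, the $\sqrt{2}$ coming from Hermitian symmetry) of bounded independent variables. Talagrand's convex concentration inequality then produces a dimension-free subgaussian tail
\begin{equation*}
\E\exp\bigl(\lambda(\|M_{\A_j}\|-\E\|M_{\A_j}\|)\bigr)\le\exp(c K^2\lambda^2)
\end{equation*}
for some universal constant $c$, uniformly in $|\A_j|$.

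Substituting these two estimates into the subadditive bound and optimising the resulting Chernoff inequality over $\theta$ yields the claimed subgaussian tail~\eqref{normconc}; the rectangular case~\eqref{normconc2} follows verbatim upon replacing the Hermitian Latala estimate by its rectangular counterpart. The main technical obstacle I anticipate is the careful bookkeeping of constants: one has to track the factor $\sqrt{2}$ arising from Hermitian symmetry and use a sharp form of Talagrand's convex concentration inequality in order to arrive at the stated $32\chi^*(\A)^2 K^2$ in the exponent of~\eqref{normconc} and $8\chi^*(\A)^2 K^2$ in~\eqref{normconc2}.
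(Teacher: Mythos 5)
Your outline coincides with the paper's proof in its skeleton: both apply the second part of Theorem~\ref{thmsubadd} to the subadditive, positively homogeneous function $\|\cdot\|$ of the upper-diagonal entries, note that each $M_{\A_j}$ has independent entries, bound $\E\|M_{\A_j}\|$ via Latala's theorem (and you are right that mean-zero entries are tacitly required there --- the paper's statement is silent on this too), and finish with a Chernoff bound optimized over $\theta$. The one genuine divergence is the ingredient supplying the subgaussian fluctuations of $\|M_{\A_j}\|$ around its mean. The paper does \emph{not} use Talagrand's convex concentration inequality here; instead it proves (Proposition~\ref{normhermprop}) that $\|M_{\A_j}\|$ is weakly $(0,16K^2)$ self-bounding in the Hermitian case (weakly $(0,4K^2)$ in the rectangular case) via the variational representation $s_1(A)=\sup_{\|u\|=\|v\|=1}\mathrm{Re}[u^*Av]$, and then feeds this into Theorem~\ref{Lugosi}, which with $a=0$ gives the exact mean-centered MGF bound $\E e^{\lambda(\|M_{\A_j}\|-\E\|M_{\A_j}\|)}\le e^{8K^2\lambda^2}$ (resp.\ $e^{2K^2\lambda^2}$); plugging this into the cover decomposition and taking $\theta=t/(16K^2\chi^*(\A)^2)$ produces precisely the exponents $t^2/(32\chi^*(\A)^2K^2)$ and $t^2/(8\chi^*(\A)^2K^2)$. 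Your Talagrand route is viable in principle (the norm is indeed convex and $\sqrt{2}$-Lipschitz in the upper-triangular entries, with complex entries handled as pairs of real coordinates), but it buys you concentration around the \emph{median}, and the median-to-mean shift plus the tail-to-MGF integration each degrade the universal constant, so the stated constants $32$ and $8$ will not come out of that argument without additional sharpening --- this is exactly the bookkeeping obstacle you flagged, and it is the reason the paper's self-bounding route is the cleaner one. The self-bounding argument also dispenses with convexity altogether, which is why Proposition~\ref{normhermprop} transfers verbatim to the rectangular case.
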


\subsection{Eigenvalues of a random matrix with (HD,$k$,$l$) entries}
The following theorem is a generalization of Theorem 1 of \citep{Aloneig} to this setting.

\begin{thm}\label{HDeigconc}
Let $M$ be a real valued random symmetric matrix with entries bounded by 1, and the upper diagonal entries satisfying (HD,$k$,$l$). Let $\lambda_1(M)\ge \ldots\ge \lambda_n(M)$ be the eigenvalues of $M$ in decreasing order.
For every positive integer $1\le s\le n$, the probability that $\lambda_s(M)$ deviates from its median by more than $t$ is at most $4e^{-t^2/(80 s^2\cdot kl)}$. The same estimate holds for the probability that $\lambda_{n-s+1}$ deviates from its median by more than $t$.
\end{thm}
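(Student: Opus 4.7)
The plan is to adapt the approach of \citep{Aloneig}, which establishes the analogous concentration result for symmetric matrices with independent entries via Talagrand's convex distance inequality, by substituting Corollary \ref{TalagrandHD} for the classical Talagrand inequality. The factor $kl$ in the exponent of the former (in place of $1$ in the independent case) then propagates directly, and the $s$-dependence comes from a spectral certificate constructed from the top-$s$ eigenvectors of $M$.

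Think of $M$ as a function of the upper-triangular variables $(X_{ij})_{i\le j}$, which satisfy (HD,$k$,$l$), so that Corollary \ref{TalagrandHD} applies on this product space. The central claim to establish is the following Talagrand distance estimate: \emph{for every $r\in \R$ and $t \ge 0$, if $\lambda_s(M)\ge r+t$, then}
\begin{equation*}
d_T\bigl(M,\,\{M':\lambda_s(M')\le r\}\bigr) \;\ge\; \frac{t}{2\sqrt{2}\,s}.
\end{equation*}
Granted this, Corollary \ref{TalagrandHD} yields
\begin{equation*}
\PP(\lambda_s\le r)\cdot\PP(\lambda_s\ge r+t)\;\le\; \exp\left(-\frac{t^2}{80\,s^2\,kl}\right).
\end{equation*}
Applying this with $r$ equal to the median $m$ of $\lambda_s$ bounds $\PP(\lambda_s\ge m+t)$ by $2e^{-t^2/(80 s^2 kl)}$, and applying it with $r=m-t$ bounds $\PP(\lambda_s\le m-t)$ by the same quantity, giving the stated $4e^{-t^2/(80 s^2 kl)}$. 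The corresponding bound for $\lambda_{n-s+1}$ follows by replacing $M$ with $-M$.

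To construct the certificate, let $v_1,\ldots,v_s$ be orthonormal top-$s$ eigenvectors of $M$ and $P:=\sum_{k=1}^s v_kv_k^T$ the associated rank-$s$ spectral projector. By the min-max characterization of $\lambda_s$ applied to $M'$ (with test subspace $\mathrm{span}(v_1,\ldots,v_s)$), there exists a unit vector $v^*$ in that span with $(v^*)^T M' v^*\le \lambda_s(M')\le r$, whence $(v^*)^T(M-M')v^*\ge t$. Expanding this quadratic form, using the symmetry of $M-M'$ together with $|(M-M')_{ij}|\le 2$ and the fact that a single upper-triangular variable controls both $M_{ij}$ and $M_{ji}$, one obtains
\begin{equation*}
\sum_i (v^*_i)^2\,\II[M_{ii}\ne M'_{ii}]\;+\;2\sum_{i<j}|v^*_iv^*_j|\,\II[M_{ij}\ne M'_{ij}]\;\ge\;\tfrac{t}{2}.
\end{equation*}
Since $v^*$ lies in the range of $P$, Cauchy-Schwarz gives $(v^*_i)^2\le P_{ii}$ and $|v^*_iv^*_j|\le \sqrt{P_{ii}P_{jj}}$, eliminating the $M'$-dependence of the weights. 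Setting $\alpha_{ii}(M):=P_{ii}$ and $\alpha_{ij}(M):=2\sqrt{P_{ii}P_{jj}}$ for $i<j$, the trace identity $\mathrm{tr}(P)=s$ gives $\|\alpha(M)\|_2^2 = 2s^2-\sum_i P_{ii}^2\le 2s^2$; after normalization this delivers the claimed lower bound on $d_T$.

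The main obstacle is precisely this certificate step: the natural weights $|v^*_iv^*_j|$ arising from min-max depend on $M'$ through $v^*$, and the key trick is to dominate them, up to a constant factor of $\sqrt{2}$, by the diagonal entries of $P(M)$, so that the weights depend only on $M$ and have $\ell_2$-norm of order $s$ rather than $\sqrt{n}$. All remaining pieces are routine: a symmetry computation in the quadratic form, the trace identity for the rank-$s$ projector, and an invocation of Corollary \ref{TalagrandHD} with median-based choices of $r$.
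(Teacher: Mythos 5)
Your proposal is correct and is exactly the paper's approach: the paper's proof is a one-line instruction to adapt the argument of \citep{Aloneig} with Corollary \ref{TalagrandHD} in place of the classical Talagrand inequality, and your write-up fills in precisely that adaptation (the rank-$s$ projector certificate with $\|\alpha\|_2^2\le 2s^2$, the distance bound $t/(2\sqrt{2}s)$, and the median argument), reproducing the stated constant $80s^2kl$ exactly.
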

\begin{remark}
We leave it to the reader as an exercise to adapt the proof of \citep{meckes2004concentration}, Theorem 2 to this setting, and reduce the $s^2$ in the exponent to $s$.
\end{remark}
\begin{remark}
The correct range of concentration of the eigenvalues of a Wigner matrix $M_n$ is $O\left(\sqrt{\frac{\log(n)}{n}}\right)$ in the bulk, and $O\left(n^{-1/6}\right)$ on the edge, as it is shown in \citep{dallaporta2012eigenvalue}.
\end{remark}

\section{Proofs}\label{secproofs}
\begin{proof}[Proof of Lemma \ref{HDlemma}]
Given $Y_1,\ldots,Y_N$ and $S_1,\ldots,S_n$, let us define $S_1',\ldots,S_n'$ the following way: for evey $1\le i\le N$, $i\in S_j'$ if and only if $1\le j\le n$ is the smallest index such that $i\in S_j$. Now let $Y_1':=Y_{S_1'},\ldots,Y_n':=Y_{S_n'}$, then the reader can easily verify that $X_1,\ldots,X_n$ satisfies (HD,$m$,$m$) for $\{Y_i', i\in [n]\}$.
\end{proof}

\begin{proof}[Proof of Theorem \ref{thmsubadd}]
Part 1 is implied by the subadditivity of $f$, and the convexity of the exponential function. For $\theta>0$,
\begin{eqnarray*}
&&e^{\theta f(X)}\le e^{\theta \left(f\left(X_{\A_1}\right)+\ldots+f\left(X_{\A_{\chi(\A)}}\right)\right)}=e^{\theta \chi(\A) \sum_{i=1}^{\chi(\A)}\frac{1}{\chi(\A)}f\left(X_{\A_i}\right)}\\
&&\le \frac{1}{\chi(\A)}  \sum_{i=1}^{\chi(\A)} e^{\theta \chi(\A)f\left(X_{\A_i}\right)}
\end{eqnarray*}
Taking expectations gives the result. Part 2 is similar: for $\theta>0$,
\begin{eqnarray*}
&&e^{\theta f(X)}=e^{\theta f\left(\sum_{i} w_i X_{\A_i}\right)}\le e^{\theta \sum_i f\left(w_i X_{\A_i}\right)} \le e^{\theta \sum_i w_i f\left(X_{\A_i}\right)}\\
&&\le e^{\theta \chi^*(\A) \sum_i \frac{w_i}{\chi^*(\A)} f\left(X_{\A_i}\right)}\le \sum_{i}\frac{w_i}{\chi^{*}(\A)} e^{\theta \chi^*(\A) f\left(X_{\A_i}\right)}.
\end{eqnarray*}
Taking expectations gives the result. We have used the $f(cx)\le cf(x)$ condition (for $0\le c\le 1$, since $0\le w_i\le 1$ for exact covers).
\end{proof}

\begin{proof}[Proof of Theorem \ref{thmHD}]
We are only going to prove the first part (concerning $\alpha$-self-bounding functions), the second part is similar.

The existence of $g$ such that $f(X)=g(Y)$ is trivial, since $X$ is a function of $Y$. For each $1\le i\le N$, $y\in \Sigma$, let 

\[h_i(y_{-i}):=\inf_{y_i'}h(y_1,\ldots,y_{i-1},y_{i}',y_{i+1},\ldots,y_n).\]
Using the fact that $f(X)$ is $\alpha$-$(a,b)$ self-bounding, we can write
\begin{eqnarray*}
&&\sum_{i=1}^N h(y)-h_i(y_{-i})\le \frac{1}{l}\sum_{i=1}^N\sum_{j\in R_i}\alpha_j(x(y))\\
&&\le\frac{1}{l} k \sum_{j=1}^n \alpha_j(x(y))\le \frac{k}{l} (af(x(y))+b)\le k h(y)+\frac{k}{l}b,
\end{eqnarray*}
and thus the result follows.
\end{proof}

\begin{proof}[Proof of Corollary \ref{McdiarmidHD}]
This follows by a 4 times worse constant from the fact that $f$ is weakly $\alpha$-$(0,\sum_{i=1}^n c_i^2)$  self-bounding. We can get this better constant by writing $f(X)=g(Y)$, and directly applying Mcdiarmid's bounded differences inequality to the independent variables $Y_1,\ldots,Y_N$.
\end{proof}

\begin{proof}[Proof of Corollary \ref{TalagrandHD}]
The proof is similar to the proof of Corollary 1 of \citep{LugosiSelfBounding}. By Lemma 3.2 of \citep{SelfBoundingDobrushin}, we know that $d_T^2(x,S)$ is weakly $\alpha$-(4,0) self-bounding. By Theorem \ref{thmHD} we have a function $g$ with $g(Y)=d_T^2(X,S)$, and $h(y)=g(y)/l$ is $(4k,0)$ self-bounding. 

Thus, by Theorem \ref{Lugosi}, we have that for $0\le \lambda\le \frac{1}{2kl}$,
\begin{eqnarray*}
&&\log \E(e^{\lambda \left(d_T^2(X,S)-\E(d_T^2(X,S))\right)})=\log \E(e^{\lambda l \left(h(Y)-\E(h(Y))\right)})\\
&&\le \frac{4kl \lambda^2 \E(d_T^2(X,S))}{2(1-2kl\lambda)},
\end{eqnarray*}
thus
\begin{equation}\label{dTeq1}
\log \E(e^{\lambda d_T^2(X,S)})\le \lambda \E(d_T^2(X,S)) + \frac{4kl \lambda^2}{2(1-2kl\lambda)} \E(d_T^2(X,S)).
\end{equation}
Again by Theorem \ref{Lugosi},
\begin{eqnarray*}
&&\log \PP(X\in S)= \log \PP(d_T^2(X,S) -\E d_T^2(X,S) \le - \E d_T^2(X,S))\\
&&=\log \PP(h(Y) -\E h(Y) \le - \E h(Y))\le -\frac{\E h(Y)}{2\cdot 4k}=
-\frac{\E d_T^2(X,S)}{8kl}.
\end{eqnarray*}
By adding this to \eqref{dTeq1} for $\lambda=\frac{1}{10kl}$, we get the result.
\end{proof}

Before proving Theorem \ref{normld1}, we introduce some results that we are going to use:
\begin{thm}[Theorem 2 of \citep{latala2005some}]\label{latalathm}
For any finite matrix of independent mean zero r.v.'s $X_{ij}$ we have
\[\E||(X_{ij})||\le C\left(\max_{i}\sqrt{\sum_j \E X_{ij}^2}+\max_j \sqrt{\sum_i \E X_{ij}^2}+\sqrt[4]{\sum_{ij}\E X_{ij}^4}\right)\]
\end{thm}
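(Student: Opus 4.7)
The plan is to follow the blueprint of \citep{Aloneig} for independent entries, substituting Corollary \ref{TalagrandHD} for the classical Talagrand convex distance inequality. Since the upper-diagonal entries $X=(X_{ij})_{1\le i\le j\le n}$ satisfy (HD,$k$,$l$), that corollary applies to $X$ and supplies the factor $\exp(-t^2/(10kl))$ in place of the usual $\exp(-t^2/4)$.

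The first step is a Courant--Fischer certificate for the upper level set $\{\lambda_s(M)\ge r\}$. By the min-max theorem there is an $s$-dimensional subspace $V=\operatorname{span}(u_1,\ldots,u_s)$ with $u_1,\ldots,u_s$ orthonormal such that $x^T M x\ge\lambda_s(M)$ for every unit $x\in V$. Writing a general unit $x\in V$ as $\sum_j c_j u_j$ with $\sum_j c_j^2=1$, setting $w_i:=\sum_{j=1}^s u_j(i)^2$ (so $\sum_i w_i=s$), and using Cauchy--Schwarz $|x_i x_l|\le\sqrt{w_i w_l}$ together with $|M_{il}-M'_{il}|\le 2$ for any competing symmetric $M'$ with entries in $[-1,1]$, one obtains
\[
\lambda_s(M')\ge\lambda_s(M)-\sum_{1\le i\le l\le n}\alpha_{il}(M)\,\II[M_{il}\ne M'_{il}],
\]
with $\alpha_{ii}(M):=2w_i$ and $\alpha_{il}(M):=4\sqrt{w_i w_l}$ for $i<l$. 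A direct expansion gives
\[
\sum_{i\le l}\alpha_{il}(M)^2=4\sum_i w_i^2+16\sum_{i<l}w_i w_l=8\Bigl(\sum_i w_i\Bigr)^2-4\sum_i w_i^2\le 8s^2,
\]
so $\|\alpha(M)\|_2\le s\sqrt{8}$.

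The second step feeds this certificate into Corollary \ref{TalagrandHD}. Let $m$ be the median of $\lambda_s(M)$ and set $A:=\{M:\lambda_s(M)\le m\}$, so $\PP(X\in A)\ge 1/2$. If $\lambda_s(M)>m+t$, then for every $M'\in A$ the certificate forces $\sum_{i\le l}\alpha_{il}(M)\II[M_{il}\ne M'_{il}]>t$, and rescaling by $\|\alpha(M)\|_2$ exhibits a unit $\ell^2$ vector witnessing $d_T(X,A)\ge t/(s\sqrt{8})$. Corollary \ref{TalagrandHD} then yields
\[
\PP(\lambda_s(M)>m+t)\le \frac{\exp(-t^2/(80 s^2 kl))}{\PP(X\in A)}\le 2\exp(-t^2/(80 s^2 kl)),
\]
in which $80=8\cdot 10$ combines the certificate norm bound and the constant from \eqref{talSStHD}. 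The lower tail of $\lambda_s$, and both tails of $\lambda_{n-s+1}$, are reduced to the same mechanism either by applying the argument to $-M$ (which inherits the HD structure and the entry bound, and satisfies $\lambda_{n-s+1}(M)=-\lambda_s(-M)$) or by using the dual Courant--Fischer characterisation of $\{\lambda_s\le r\}$ via an $(n-s+1)$-dimensional subspace on which the quadratic form of $M$ is bounded above by $r$; summing an upper and a lower tail contributes the factor $4$ in the final bound.

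The main obstacle is combinatorial bookkeeping rather than analysis. One must carry the two symmetry factors through correctly in the certificate construction---the factor $2$ from $|M_{il}-M'_{il}|\le 2$ and the factor $2$ from the off-diagonal symmetry $X_{il}=X_{li}$---so that $\|\alpha\|_2^2$ lands at the tight value $8s^2$; this single number pairs with the constant $10$ from Corollary \ref{TalagrandHD} to give the exponent $80\,s^2 kl$. The second delicate point is the lower tail: one needs a certificate that keeps the exponent proportional to $s^2$ rather than $(n-s+1)^2$, which, as in \citep{Aloneig}, requires the dual subspace construction to be executed with the same symmetry book-keeping as above.
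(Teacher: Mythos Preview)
Your proposal does not address the stated theorem. Theorem~\ref{latalathm} is Lata{\l}a's bound on the expected operator norm of a matrix with independent mean-zero entries; in the paper it is merely \emph{quoted} from \citep{latala2005some} as an auxiliary input to the proof of Theorem~\ref{normld1}, and no proof of it is given or required. What you have written is instead a proof of Theorem~\ref{HDeigconc}: concentration of the $s$th eigenvalue of a symmetric matrix with (HD,$k$,$l$)-dependent entries around its median, via a Courant--Fischer certificate fed into Corollary~\ref{TalagrandHD}. None of this machinery --- Talagrand's convex distance, the subspace witness, the constant $80s^2kl$ --- has anything to do with bounding $\E\|(X_{ij})\|$ for a matrix of independent entries; Lata{\l}a's proof proceeds by moment methods and is carried out in \citep{latala2005some}.

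If your intent was actually to prove Theorem~\ref{HDeigconc}, then your argument is essentially the one the paper points to (``a simple adaptation of the argument of \citep{Aloneig}'' with Corollary~\ref{TalagrandHD} replacing the independent-case convex distance inequality), and you have correctly tracked how the constant $80kl$ arises as $8\cdot 10$. But as a proof of the statement you were asked about, it is simply off-target.
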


\begin{proposition}\label{eigvalprop}
Let $A_{i,j}$ be a symmetric real valued matrix with entries bounded by $K$. Let $\lambda_1(A_{i,j})$ be its largest eigenvalue. If we look at $\lambda_1$ as a function of only $A_{1\le i\le j\le n}$, then $\lambda_1$ is weakly $(0,16K^2)$ self-bounding.
\end{proposition}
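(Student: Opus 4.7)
The plan is to exploit the variational characterization $\lambda_1(A)=\max_{\|v\|=1} v^\top A v$ together with the minimum form \eqref{fimin} for the self-bounding functions. For each upper triangular entry $(i,j)$ with $i\le j$, I would define $g_{ij}(A_{-ij}):=\inf_{c\in[-K,K]}\lambda_1(A^{(c)})$, where $A^{(c)}$ is obtained from $A$ by replacing $A_{ij}$ by $c$ (and, when $i<j$, also $A_{ji}$ by $c$ to preserve symmetry). This is admissible since the entries of the matrix are constrained to $[-K,K]$, and it clearly satisfies $g_{ij}(A_{-ij})\le \lambda_1(A)$ as required by the remark following Definition \ref{defsb}.

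The key step is a one-line variational estimate. Fix a unit eigenvector $v$ with $\lambda_1(A)=v^\top A v$. Then $\lambda_1(A^{(c)})\ge v^\top A^{(c)} v$, and $v^\top A^{(c)} v - v^\top A v$ equals $v_i^2(c-A_{ii})$ on the diagonal and $2v_iv_j(c-A_{ij})$ off the diagonal, the factor $2$ coming from changing both $A_{ij}$ and $A_{ji}$ simultaneously. Taking the supremum over $c\in[-K,K]$ and using $|A_{ij}|\le K$, this yields
\[
\lambda_1(A)-g_{ii}(A_{-ii})\le 2K v_i^2, \qquad \lambda_1(A)-g_{ij}(A_{-ij})\le 4K|v_iv_j| \text{ for } i<j.
\]

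Squaring and summing,
\[
\sum_{i\le j}(\lambda_1(A)-g_{ij}(A_{-ij}))^2 \;\le\; 4K^2\sum_i v_i^4 + 16K^2\sum_{i<j}v_i^2v_j^2,
\]
and using the identity $\bigl(\sum_iv_i^2\bigr)^2=1=\sum_iv_i^4+2\sum_{i<j}v_i^2v_j^2$ the right-hand side equals $8K^2-4K^2\sum_iv_i^4\le 8K^2$, comfortably within the $16K^2$ bound. The only point of care is the bookkeeping of the factor $2$ on off-diagonal entries induced by symmetry; no eigenvalue perturbation machinery beyond the one-line variational inequality is required, so the proof reduces to the short algebraic verification above.
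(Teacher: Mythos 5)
Your proof is correct and follows essentially the same route as the paper's: the variational characterization $\lambda_1(A)=v^\top A v$ at a top unit eigenvector, the one-sided perturbation bound $\lambda_1(A)-\lambda_1(A^{(c)})\le v^\top (A-A^{(c)})v$ obtained by reusing $v$ as a test vector, and a sum-of-squares estimate at the end. Your bookkeeping is in fact slightly sharper --- treating the diagonal entries separately (giving $2Kv_i^2$ rather than $4Kv_i^2$) and summing only over $i\le j$ yields the constant $8K^2$, whereas the paper sums $\bigl(4K|v_i||v_j|\bigr)^2$ crudely over all ordered pairs to get $16K^2$ --- but this is a refinement of the same argument rather than a different approach.
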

\begin{proof}
This is a reformulation of Example of page 42-43 of \citep{Lugosinotes} (but they made a mistake by treating all the elements of the symmetric matrix as independent random variables, so the correct constant is 4 times worse).

For some $v\in \R^n$,
\[\lambda_1(A)=\sup_{u\in \R^n:||u||=1}u^t A u= v^t Av=\sum_{i,j} v_i v_j A_{ij},\]
and for $A'$ created by replacing $A_{i,j}$ and $A_{j,i}$ with $A_{i,j}'$, we have
\[\lambda_1(A)-\lambda_1(A')\le 2 v_i v_j (A_{i,j}-A_{i,j}')\le 4K |v_i| |v_j|,\]
so the result follows by 
\[\sum_{i,j}(4K |v_i| |v_j|)^2\le 16K^2\left(\sum_i |v_i|^2\right) \left(\sum_j |v_j|^2\right)\le 16K^2.\]
\end{proof}

In the following proposition, we prove a similar result for largest singular value:
\begin{proposition}\label{normhermprop}
Let $A:=(A_{i,j})$ be an $n\times N$ sized complex valued matrix with entries bounded by $K$ in absolute value. Let $s_1(A)$ be its largest singular value (which is equal to its operator norm). Then $s_1$ is weakly $(0,4K^2)$ self-bounding.

If $A:=(A_{i,j})$ is a Hermitian $n\times n$ matrix, then $s_1$, as the function of only $A_{1\le i\le j\le n}$, is weakly $(0,16K^2)$ self-bounding.
\end{proposition}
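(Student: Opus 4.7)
The plan is to follow the argument of Proposition \ref{eigvalprop}, replacing the quadratic form $u^t A u$ by the appropriate variational characterization of the largest singular value. For a general $A\in\mathbb{C}^{n\times N}$ one has $s_1(A)=\sup_{\|u\|=\|v\|=1}|u^* A v|$, and for Hermitian $A$ the operator norm equals the spectral radius, giving $s_1(A)=\sup_{\|u\|=1}|u^* A u|$. In both cases I will bound $s_1(A)-s_1(A')$ by a product of coordinates of the (extremal) test vectors and then sum squares using $\sum|u_i|^2=1$ to match the self-bounding definition.

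For the non-Hermitian case, I would fix unit vectors $u,v$ attaining the supremum, rotating the phase of $u$ so that $u^* A v = s_1(A)\ge 0$. Given $A'$ obtained from $A$ by replacing a single entry $A_{i,j}$ with some $A_{i,j}'$ satisfying $|A_{i,j}'|\le K$, the reverse triangle inequality yields
\[
s_1(A')\ge|u^* A' v|\ge s_1(A) - |\bar u_i(A_{i,j}-A_{i,j}')v_j|\ge s_1(A)-2K|u_i||v_j|,
\]
so $s_1(A)-s_1(A')\le 2K|u_i||v_j|$ uniformly in $A_{i,j}'$. Squaring and summing gives $\sum_{i,j}4K^2|u_i|^2|v_j|^2=4K^2$, establishing the weakly $(0,4K^2)$-self-bounding property.

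For the Hermitian case, I would let $u$ attain $s_1(A)=|u^* A u|$ and treat the upper-triangular entries as the free variables (changing $A_{i,j}$ for $i<j$ also forces a conjugate change in $A_{j,i}$, while diagonal entries are real). Using $s_1(A')\ge|u^* A' u|$ together with reverse triangle inequality gives $s_1(A)-s_1(A')\le|u^*(A-A')u|$ regardless of the sign of $u^* A u$. This specializes to $|2\operatorname{Re}[\bar u_i u_j(A_{i,j}-A_{i,j}')]|\le 4K|u_i||u_j|$ for $i<j$ and to $|u_i|^2|A_{i,i}-A_{i,i}'|\le 2K|u_i|^2\le 4K|u_i|^2$ for $i=j$. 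Summing squares over $\{(i,j):i\le j\}$ and using this uniform upper bound yields at most $\sum_{i,j\in[n]}16K^2|u_i|^2|u_j|^2=16K^2$, verifying weakly $(0,16K^2)$-self-bounding. The only subtle point, which is the main (minor) obstacle, is that the variational formulas contain an absolute value: this is defused by a phase rotation in the first case and by the reverse triangle inequality $|u^* A u|-|u^* A' u|\le|u^*(A-A')u|$ in the second; otherwise the argument is a routine modification of the symmetric eigenvalue case.
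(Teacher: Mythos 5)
Your proof is correct and takes essentially the same approach as the paper's: the paper fixes extremal unit vectors in the variational formula $s_1(A)=\sup_{\|u\|=\|v\|=1}\mathrm{Re}\left[u^* A v\right]$ and bounds each coordinatewise decrease by $2K|U_i||V_j|$ before summing squares, which is exactly your rectangular argument with the phase rotation absorbed into taking the real part. Your Hermitian case, via $s_1(A)=\sup_{\|u\|=1}|u^* A u|$ and the reverse triangle inequality (with the conjugate entry $A_{j,i}$ and the real diagonal handled explicitly), correctly supplies the detail the paper leaves to the reader and matches the constant $16K^2$.
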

\begin{proof}
We can write, for some $U,V$ complex valued unit vectors, that
\begin{eqnarray*}
s_1(A)&=&\sup_{u,v\in \C^n: ||u||,||v||=1} Re[ u^* A v]=\sup_{u,v: ||u||,||v||=1}\sum_{ij} Re [u_i^* v_j A_{ij}]\\
&=&\sum_{ij}Re [U_i^* V_j A_{ij}].
\end{eqnarray*}
Let's denote by $A'$ the matrix that we get by changing $A_{ij}$ to $A_{ij}'$, then
one can easily see that
\[s_1(A)-s_1(A')\le Re [U_i^* V_j A_{ij}]-Re [U_i^* V_j A_{ij}']\le 2 K |U_i| |V_j|,\]
and thus the first statement of the proposition is implied by
\[\sum_{ij}(2 K |U_i| |V_j|)^2\le 4K^2\left(\sum_{1\le i\le n}|U_i|^2\right)\left(\sum_{1\le j\le N}|V_j|^2\right)\le 4K^2.\]
The proof of the last statement is similar, and is left to the reader.
\end{proof}

\begin{proof}[Proof of Theorem \ref{normld1}]
In the Hermitian case, let $\Lambda= \C^{n(n-1)/2}$. Let $f:\Lambda\to \R$ be the norm of the Hermitian matrix with upper diagonal elements as argument. Let $\A=[n(n-1)/2]$, and let $(\A_i,w_i)$ be an exact cover for $\A$ with $\sum_i w_i=\chi^*(A)$.

Then $f(cx)= cf(x)$ for $0\le c\le 1$, so by the second part of Theorem \ref{thmsubadd}, 

\[\E\left(e^{\theta ||M||}\right)\le \frac{1}{\chi^*(\A)}\sum_j w_j \E\left(e^{\theta \chi^*(\A) \left|\left|M_{\A_j}\right|\right|}\right).\]

Now for each $j$, $M_{\A_j}$ is a symmetric matrix with independent entries, and thus,
by the second statement of Proposition \ref{normhermprop}, we know that $||M_{\A_j}||$ is weakly $(0,16K^2)$ self-bounding as a function of its upper diagonal entries. This means that by Theorem \ref{Lugosi}, we have for every $\lambda>0$, for every $j$,
\[\E\left(e^{\lambda \left|\left|M_{\A_j}\right|\right|}\right)\le e^{\lambda \E \left|\left|M_{\A_j}\right|\right|}\cdot e^{8K^2\lambda^2},\]
which, by setting $\lambda:=\chi^*(\A)\theta$, implies that
\begin{equation}\label{Mchisqeq}
\E\left(e^{\theta ||M||}\right)\le  e^{8K^2\chi^*(\A)^2\theta^2} \frac{1}{\chi^*(\A)}\sum_j w_j e^{\theta \chi^*(\A) \E \left|\left|M_{\A_j}\right|\right|}.
\end{equation}
Now, Theorem \ref{latalathm}, combined with the boundedness assumption, implies that
\[\E \left|\left|M_{\A_j}\right|\right|\le 3C\sqrt{n},\]
thus by Markov's inequality, we get
\[\PP\left(||M||\ge 3C\chi^*(\A) K \sqrt{n}+t\right)\le \exp\left(8K^2\chi^*(\A)^2\theta^2-\theta t\right),\]
and taking $\theta=\frac{t}{16\chi^*(\A)^2}$ gives the bound.

The proof of the rectangular case is similar. 
\end{proof}

\begin{proof}[Proof of Theorem \ref{HDeigconc}]
This is just a simple adaptation of the argument of \citep{Aloneig}. The version of Talagrand's inequality for (HD,$k$,$l$) dependence, which follows from Corollary \ref{TalagrandHD}, is of the form
\[Pr[\A] Pr[\B]\le e^{-t^2/(10kl)},\]
and thus the constant the exponent becomes $80kl$ (instead of $32$).
\end{proof}

\section*{Acknowledgements}
The author thanks Doma Sz\'{a}sz and Mogyi T\'{o}th for infecting him with their enthusiasm of probability.
He thanks his thesis supervisors, Louis Chen and Adrian R\"{o}llin, for the opportunity to study in Singapore, and their  useful advices. Finally, many thanks to my brother, Roland Paulin, for the enlightening discussions.
\bibliographystyle{imsart-nameyear}
\bibliography{References}

\begin{thebibliography}{16}

\bibitem[\protect\citeauthoryear{Alon, Krivelevich and Vu}{2002}]{Aloneig}
\begin{barticle}[author]
\bauthor{\bsnm{Alon},~\bfnm{Noga}\binits{N.}},
  \bauthor{\bsnm{Krivelevich},~\bfnm{Michael}\binits{M.}} \AND
  \bauthor{\bsnm{Vu},~\bfnm{Van~H.}\binits{V.~H.}}
(\byear{2002}).
\btitle{On the concentration of eigenvalues of random symmetric matrices}.
\bjournal{Israel J. Math.}
\bvolume{131}
\bpages{259--267}.
\bdoi{10.1007/BF02785860}.
\bmrnumber{1942311 (2003i:15020)}
\end{barticle}
\endbibitem

\bibitem[\protect\citeauthoryear{Anderson and
  Zeitouni}{2008}]{Zeitounifiniterange}
\begin{barticle}[author]
\bauthor{\bsnm{Anderson},~\bfnm{Greg~W.}\binits{G.~W.}} \AND
  \bauthor{\bsnm{Zeitouni},~\bfnm{Ofer}\binits{O.}}
(\byear{2008}).
\btitle{A law of large numbers for finite-range dependent random matrices}.
\bjournal{Comm. Pure Appl. Math.}
\bvolume{61}
\bpages{1118--1154}.
\bdoi{10.1002/cpa.20235}.
\bmrnumber{2417889 (2010a:60091)}
\end{barticle}
\endbibitem

\bibitem[\protect\citeauthoryear{Boucheron, Lugosi and
  Massart}{2000}]{Boucheronsharp}
\begin{barticle}[author]
\bauthor{\bsnm{Boucheron},~\bfnm{St{\'e}phane}\binits{S.}},
  \bauthor{\bsnm{Lugosi},~\bfnm{G{\'a}bor}\binits{G.}} \AND
  \bauthor{\bsnm{Massart},~\bfnm{Pascal}\binits{P.}}
(\byear{2000}).
\btitle{A sharp concentration inequality with applications}.
\bjournal{Random Structures Algorithms}
\bvolume{16}
\bpages{277--292}.
\bdoi{10.1002/(SICI)1098-2418(200005)16:3<277::AID-RSA4>3.0.CO;2-1}.
\bmrnumber{1749290 (2001m:26072)}
\end{barticle}
\endbibitem

\bibitem[\protect\citeauthoryear{Boucheron, Lugosi and
  Massart}{2009}]{LugosiSelfBounding}
\begin{barticle}[author]
\bauthor{\bsnm{Boucheron},~\bfnm{St{\'e}phane}\binits{S.}},
  \bauthor{\bsnm{Lugosi},~\bfnm{G{\'a}bor}\binits{G.}} \AND
  \bauthor{\bsnm{Massart},~\bfnm{Pacal}\binits{P.}}
(\byear{2009}).
\btitle{On concentration of self-bounding functions}.
\bjournal{Electron. J. Probab.}
\bvolume{14}
\bpages{no. 64, 1884--1899}.
\bmrnumber{2540852 (2010k:60058)}
\end{barticle}
\endbibitem

\bibitem[\protect\citeauthoryear{Burton, Goulet and Meester}{1993}]{onedep}
\begin{barticle}[author]
\bauthor{\bsnm{Burton},~\bfnm{Robert~M.}\binits{R.~M.}},
  \bauthor{\bsnm{Goulet},~\bfnm{Marc}\binits{M.}} \AND
  \bauthor{\bsnm{Meester},~\bfnm{Ronald}\binits{R.}}
(\byear{1993}).
\btitle{On {$1$}-dependent processes and {$k$}-block factors}.
\bjournal{Ann. Probab.}
\bvolume{21}
\bpages{2157--2168}.
\bmrnumber{1245304 (94j:60072)}
\end{barticle}
\endbibitem

\bibitem[\protect\citeauthoryear{Chen, Goldstein and
  Shao}{2011}]{ChenGoldsteinShao}
\begin{bbook}[author]
\bauthor{\bsnm{Chen},~\bfnm{Louis H.~Y.}\binits{L.~H.~Y.}},
  \bauthor{\bsnm{Goldstein},~\bfnm{Larry}\binits{L.}} \AND
  \bauthor{\bsnm{Shao},~\bfnm{Qi-Man}\binits{Q.-M.}}
(\byear{2011}).
\btitle{Normal approximation by {S}tein's method}.
\bseries{Probability and its Applications (New York)}.
\bpublisher{Springer}, \baddress{Heidelberg}.
\bmrnumber{2732624}
\end{bbook}
\endbibitem

\bibitem[\protect\citeauthoryear{Chen and Shao}{2004}]{ChenShaoLoc}
\begin{barticle}[author]
\bauthor{\bsnm{Chen},~\bfnm{Louis H.~Y.}\binits{L.~H.~Y.}} \AND
  \bauthor{\bsnm{Shao},~\bfnm{Qi-Man}\binits{Q.-M.}}
(\byear{2004}).
\btitle{Normal approximation under local dependence}.
\bjournal{Ann. Probab.}
\bvolume{32}
\bpages{1985--2028}.
\bdoi{10.1214/009117904000000450}.
\bmrnumber{2073183 (2005f:60054)}
\end{barticle}
\endbibitem

\bibitem[\protect\citeauthoryear{Dallaporta}{2012}]{dallaporta2012eigenvalue}
\begin{barticle}[author]
\bauthor{\bsnm{Dallaporta},~\bfnm{S.}\binits{S.}}
(\byear{2012}).
\btitle{Eigenvalue variance bounds for Wigner random matrices}.
\bjournal{arXiv preprint arXiv:1203.1597}.
\end{barticle}
\endbibitem

\bibitem[\protect\citeauthoryear{Hofmann-Credner and Stolz}{2008}]{Stolz}
\begin{barticle}[author]
\bauthor{\bsnm{Hofmann-Credner},~\bfnm{Katrin}\binits{K.}} \AND
  \bauthor{\bsnm{Stolz},~\bfnm{Michael}\binits{M.}}
(\byear{2008}).
\btitle{Wigner theorems for random matrices with dependent entries: ensembles
  associated to symmetric spaces and sample covariance matrices}.
\bjournal{Electron. Commun. Probab.}
\bvolume{13}
\bpages{401--414}.
\bdoi{10.1214/ECP.v13-1395}.
\bmrnumber{2415147 (2009d:82071)}
\end{barticle}
\endbibitem

\bibitem[\protect\citeauthoryear{Janson}{2004}]{Jansonlargedev}
\begin{barticle}[author]
\bauthor{\bsnm{Janson},~\bfnm{Svante}\binits{S.}}
(\byear{2004}).
\btitle{Large deviations for sums of partly dependent random variables}.
\bjournal{Random Structures Algorithms}
\bvolume{24}
\bpages{234--248}.
\bdoi{10.1002/rsa.20008}.
\bmrnumber{2068873 (2005e:60061)}
\end{barticle}
\endbibitem

\bibitem[\protect\citeauthoryear{Lata{\l}a}{2005}]{latala2005some}
\begin{barticle}[author]
\bauthor{\bsnm{Lata{\l}a},~\bfnm{Rafa{\l}}\binits{R.}}
(\byear{2005}).
\btitle{Some estimates of norms of random matrices}.
\bjournal{Proc. Amer. Math. Soc.}
\bvolume{133}
\bpages{1273--1282 (electronic)}.
\bdoi{10.1090/S0002-9939-04-07800-1}.
\bmrnumber{2111932 (2005i:15041)}
\end{barticle}
\endbibitem

\bibitem[\protect\citeauthoryear{Lugosi}{2005}]{Lugosinotes}
\begin{bbook}[author]
\bauthor{\bsnm{Lugosi},~\bfnm{Gabor}\binits{G.}}
(\byear{2005}).
\btitle{Concentration of Measure Inequalities. Lecture notes.}
\bpublisher{Unpublished, available at
  \url{http://www.econ.upf.edu/~lugosi/anu.pdf}}.
\end{bbook}
\endbibitem

\bibitem[\protect\citeauthoryear{Meckes}{2004}]{meckes2004concentration}
\begin{barticle}[author]
\bauthor{\bsnm{Meckes},~\bfnm{Mark~W.}\binits{M.~W.}}
(\byear{2004}).
\btitle{Concentration of norms and eigenvalues of random matrices}.
\bjournal{J. Funct. Anal.}
\bvolume{211}
\bpages{508--524}.
\bdoi{10.1016/S0022-1236(03)00198-8}.
\bmrnumber{2057479 (2005c:46109)}
\end{barticle}
\endbibitem

\bibitem[\protect\citeauthoryear{Paulin}{2012a}]{Martoncoupling}
\begin{barticle}[author]
\bauthor{\bsnm{Paulin},~\bfnm{D.}\binits{D.}}
(\byear{2012}a).
\btitle{Concentration inequalities for Markov chains by Marton couplings}.
\bjournal{arXiv preprint}.
\end{barticle}
\endbibitem

\bibitem[\protect\citeauthoryear{Paulin}{2012b}]{SelfBoundingDobrushin}
\begin{barticle}[author]
\bauthor{\bsnm{Paulin},~\bfnm{D.}\binits{D.}}
(\byear{2012}b).
\btitle{Concentration of Self-Bounding Functions in Weakly Dependent Spaces by
  Stein's Method}.
\bjournal{arXiv preprint}.
\end{barticle}
\endbibitem

\bibitem[\protect\citeauthoryear{Schenker and Schulz-Baldes}{2005}]{Schenker}
\begin{barticle}[author]
\bauthor{\bsnm{Schenker},~\bfnm{Jeffrey~H.}\binits{J.~H.}} \AND
  \bauthor{\bsnm{Schulz-Baldes},~\bfnm{Hermann}\binits{H.}}
(\byear{2005}).
\btitle{Semicircle law and freeness for random matrices with symmetries or
  correlations}.
\bjournal{Math. Res. Lett.}
\bvolume{12}
\bpages{531--542}.
\bmrnumber{2155229 (2006e:82040)}
\end{barticle}
\endbibitem

\end{thebibliography}

\end{document}